\documentclass[12pt]{amsart}

\usepackage{tensor}     

\usepackage{tikz}
\usetikzlibrary{calc, decorations.pathreplacing, arrows.meta, patterns}

\usepackage[colorlinks=true,urlcolor=blue, citecolor=red,linkcolor=blue,linktocpage,pdfpagelabels, bookmarksnumbered,bookmarksopen]{hyperref}
\usepackage[hyperpageref]{backref}
\usepackage{cleveref}
\usepackage{xcolor}
\usepackage{amsthm} 
\usepackage{latexsym,amsmath,amssymb}
\usepackage{accents}
\usepackage[colorinlistoftodos,prependcaption,textsize=tiny]{todonotes}
\usepackage{a4wide}
\usepackage{soul}
\usepackage{mathtools} 
\usepackage{xparse} 
\usepackage{enumitem}		

\usepackage{calc}

\usepackage{accents}

\definecolor{indigo}{rgb}{0.29, 0.0, 0.51}
\definecolor{p1}{gray}{0.4}
\definecolor{p2}{gray}{0.6}
\definecolor{p3}{gray}{0.98}
\definecolor{p4}{gray}{0.8}
\definecolor{p5}{gray}{0.9}

plus 6pt minus 12pt
\newcommand{\sing}{\operatorname{sing}}

\def\eps{\varepsilon}

\def\id{{\rm id\, }}

\def\n{{\mathcal N}}

\def\n{{\mathcal{M}}}
\def\n{{\mathcal N}}

\def\S{{\mathbb S}}

\renewcommand{\div}{{\rm div}}

\newcommand{\pheq}{\phantom{=}}

\newtheorem{theorem}{Theorem}
\newtheorem{lemma}[theorem]{Lemma}

\newtheorem{proposition}[theorem]{Proposition}

\newtheorem{definition}[theorem]{Definition}

\def\supp{{\rm supp\,}}

\newcommand{\dif}{\,\mathrm{d}}
\newcommand{\dx}{\dif x}

\newcommand{\R}{\mathbb{R}}

\newcommand{\cN}{\mathcal{N}}

\newcommand{\brac}[1]{\left (#1 \right )}
\newcommand{\abs}[1]{\left |#1 \right |}

\newcommand{\la}{\mathopen{}\mathclose\bgroup\left\langle}
\newcommand{\ra}{\aftergroup\egroup\right\rangle}

\newcommand{\barint}{
\rule[.036in]{.12in}{.009in}\kern-.16in \displaystyle\int }

\newcommand{\barcal}{\mbox{$ \rule[.036in]{.11in}{.007in}\kern-.128in\int $}}

\def\mvint_#1{\mathchoice
          {\mathop{\vrule width 6pt height 3 pt depth -2.5pt
                  \kern -8pt \intop}\nolimits_{\kern -3pt #1}}%
          {\mathop{\vrule width 5pt height 3 pt depth -2.6pt
                  \kern -6pt \intop}\nolimits_{#1}}%
          {\mathop{\vrule width 5pt height 3 pt depth -2.6pt
                  \kern -6pt \intop}\nolimits_{#1}}%
          {\mathop{\vrule width 5pt height 3 pt depth -2.6pt
                  \kern -6pt \intop}\nolimits_{#1}}}

\numberwithin{theorem}{section} \numberwithin{equation}{section}

\def\XXint#1#2#3{{\setbox0=\hbox{$#1{#2#3}{\int}$}
     \vcenter{\hbox{$#2#3$}}\kern-.5\wd0}}

\let\latexchi\chi
\makeatletter
\renewcommand\chi{\@ifnextchar_\sub@chi\latexchi}
\newcommand{\sub@chi}[2]{
  \@ifnextchar^{\subsup@chi{#2}}{\latexchi^{}_{#2}}%
}
\newcommand{\subsup@chi}[3]{
  \latexchi_{#1}^{#3}%
}
\makeatother

\usepackage{mathtools} 

\DeclareMathOperator{\divg}{div}

\title[Regularity of minimizing $p$-harmonic maps from $B^3$ to $\S^3$]{Regularity of minimizing $p$-harmonic maps from $B^3$ to $\S^3$}

\date{\today}
\author{Katarzyna Mazowiecka}
\address[Katarzyna Mazowiecka]{
Institute of Mathematics, %
University of Warsaw,
Banacha 2,
02-097 Warszawa, Poland}
\email{k.mazowiecka@mimuw.edu.pl}

\author{Micha\l{} Mi\'{s}kiewicz}
\address[Micha\l{} Mi\'{s}kiewicz]{
Institute of Mathematics, %
University of Warsaw,
Banacha 2,
02-097 Warszawa, Poland}
\email{m.miskiewicz@mimuw.edu.pl}

 \author{Patryk Tokarczuk}
\address[Patryk Tokarczuk]{ %
University of Warsaw,
Banacha 2,
02-097 Warszawa, Poland}
\email{p.tokarczuk@student.uw.edu.pl}

\begin{document}

 \begin{abstract}
We prove that every minimizing $p$-harmonic map from $B^3$ into $\mathbb S^3$ is locally $C^{1,\alpha}$ for some $\alpha\in(0,1)$, for every $p>p_0$, where $p_0= \frac{7-\sqrt{17}}{2}\approx 1.44$. This closes the gap between the previously established regularity ranges $[2,2.642]\cup[2.961,3]$ and, in particular, yields full interior regularity for all $p\ge2$. The result also extends regularity to the subquadratic range $p_0<p<2$.
 \end{abstract}

\sloppy

\subjclass[2010]{58E20, 35B65, 35J60}
\maketitle
\tableofcontents
\sloppy

\section{Introduction}
Let $\Omega \subset \R^n$ be an open domain, let $\mathcal N \subset \R^L$ be a smooth closed Riemannian manifold, and let $1<p<\infty$. We say that $u\in W^{1,p}(\Omega,\mathcal N)$ is a minimizing $p$-harmonic map\footnote{In the case when $p=2$ we simply say harmonic.} if it minimizes the $p$-Dirichlet energy 
\begin{equation}\label{eq:p-energy}
 E_p(u) \coloneqq \int_{\Omega } |\nabla u|^p \dx 
\end{equation}
among all maps $v\in W^{1,p}(\Omega, \n)$ with $u=v$ on $\partial\Omega$ in the trace sense. Here, 
\[
 W^{1,p} (\Omega, \n) \coloneqq \{v\in W^{1,p}(\Omega, \R^L)\colon v(x) \in \n \text{ for a.e. } x\in \Omega\}.
\]
It is well known that such maps can be singular. In fact, the appearance of singularities may be forced by the topology: Consider, for example, the class of continuous maps $C^0_{\id}(B^3,\S^2)$ with $\id\colon \S^2\to\S^2$ serving as the boundary condition. By Hopf lemma we know that this class is empty. However, $\frac{x}{|x|}\in W^{1,2}_{\id}(B^3,\S^2)\coloneqq \{v\in W^{1,2}(B^3,\S^2)\colon v\big\rvert_{\partial\Omega}=\id \text{ in the trace sense}\}$. Hence, any minimizing harmonic map corresponding to the boundary data $\id\colon \S^2 \to \S^2$ needs to be discontinuous. 

As shown by Schoen and Uhlenbeck for $p=2$ in their celebrated paper \cite{SU1} and then followed by Hardt and Lin \cite{HLp} for general $p>1$, minimizers enjoy \emph{partial regularity}, that is
\begin{equation}\label{eq:singularsetestimate}
 \dim_{\mathcal H} \sing u \le n - \lfloor p \rfloor -1,
\end{equation}
where $\sing u$ is the singular set of $u$ and $\dim_{\mathcal{H}}$ is the Hausdorff dimension.

This estimate is optimal for general target manifolds. Indeed, consider the map $\frac{x'}{|x'|}\colon B^{\lfloor p \rfloor +1}\to\S^{\lfloor p \rfloor}$, which is minimizing $p$-harmonic \cite{BCL, Lin-remark,CG,HLW}, and extend it to $B^n$ by
\[
(x',x'')\mapsto \frac{x'}{|x'|}\quad \text{ for } (x',x'')\in\R^n = \R^{\lfloor p \rfloor +1}\times \R^{n-\lfloor p \rfloor -1}
\]
independently of $x''$. 

However, the appearance of singularities is not just purely topological --- singularities may be energy efficient. Let us return to the example of the identity boundary map. This time we remove the topological obstacle by enlarging the target manifold to $\S^n$. Consider $\varphi\colon \S^{n-1}\to \S^n$ given by $\varphi(x)=(x,0)$, then  both $W^{1,p}_{\varphi}(B^n,\S^{n})\neq \emptyset$ and $C^0_{\varphi}(B^n,\S^n)\neq \emptyset$. The equatorial map $u^* \in W^{1,p}_{\varphi}(B^n,\S^{n})$ for $p<n$
\[
u^*\colon B^n\to\S^n,\quad u^*(x)\coloneqq \left(\frac{x}{|x|},0\right)
\]
turns out to be $p$-minimizing in large dimensions. More precisely for $p=2$ J\"{a}ger--Kaul \cite{Jager-Kaul} proved that $u^*$ is minimizing harmonic if and only if $n\ge 7$. This was generalized by Hong \cite{Hong2000}\footnote{The result can be in fact generalized to cover the case $1<p<2$.}: for $2\le p<n$ the equatorial map $u^*$ is $p$-minimizing if and only if $2\leq p<n-\sqrt{n-1}$.

For special target manifolds, the bound \eqref{eq:singularsetestimate} on the dimension of the singular set can be improved. This line of research was initiated by Schoen--Uhlenbeck \cite{SU3}. Consider, in particular, minimizing harmonic maps from $B^n$ into $\S^n$. As noted above, such maps may have singularities when $n\ge7$. In contrast, they are regular for $n=1$ by the Morrey embedding theorem, for $n=2$ by the classical result of Morrey \cite{Morrey}, and for $n=3$ by \cite{SU3}. Finally, Schoen--Uhlenbeck's method was ingeniously improved by Okayasu \cite{Okayasu94} and, recently, Li \cite{Li2026} which results in regularity in the remaining cases $n=4,5,6$.

Nakauchi \cite{Nakauchi01} and Xin--Yang \cite{XinYang} extended these methods to minimizing $p$-harmonic maps into spheres and, more generally, into manifolds with positive sectional curvature. Their results, however, do not yield optimal regularity conclusions. In particular, they do not establish regularity for minimizing $p$-harmonic maps from $B^3$ into $\S^3$ beyond the harmonic case $p=2$.

In \cite{Gastel19}, Gastel developed a connection between the theory of Cosserat
micropolar elasticity and $p$-harmonic maps. He showed that singular minimizers
in the Cosserat model lead to singular minimizing $p$-harmonic maps from $B^3$ to $\S^3$.
Therefore, to obtain regularity for Cosserat minimizers, it suffices to rule out the existence of
singular minimizing $p$-harmonic maps from $B^3$ to $\S^3$.
Gastel established regularity of $p$-minimizing maps from $B^3$ to $\S^3$ for $2\leq p\leq\frac{32}{15}\approx 2.133$.
Later, in \cite{MM}, Mazowiecka and Miśkiewicz improved this result and established regularity for $p\in[2,2.366]\cup[2.961,3)$, which was then further improved by Gastel, Mazowiecka, Miśkiewicz in \cite{GMM}, where regularity is obtained for the range $p\in[2,2.642]$.
These results leave open the gap $(2.642,2.961)$, where regularity of minimizing $p$-harmonic maps from $B^3$ to $\S^3$
is not known.

Here we fill this gap and prove that all minimizing $p$-harmonic maps from $B^3$ to $\S^3$ are regular for all $p\in(2,3)$.
Furthermore, we also prove regularity in the subquadratic case for $p\in[p_0,2]$, where $p_0\coloneqq\tfrac{7 - \sqrt{17}}{2} \approx 1.438$.
In summary, we obtain:

\begin{theorem}\label{regularity intro}
Minimizing $p$-harmonic maps $u\in W^{1,p}(B^3,\S^3)$ are regular for $p\in[p_0,3)$, where $p_0\coloneqq\tfrac{7 - \sqrt{17}}{2} \approx 1.438$.
\end{theorem}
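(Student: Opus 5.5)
The plan is the standard reduction to tangent maps, followed by a stability (second variation) argument that rules out nonconstant tangent maps. By the Hardt--Lin partial regularity theory \eqref{eq:singularsetestimate} with $n=3$ and $\lfloor p\rfloor\ge1$, the singular set of a minimizer has dimension at most $1$. Hence it suffices to show that there is no nonconstant minimizing tangent map, i.e.\ no $0$-homogeneous minimizer $u(x)=\phi(x/|x|)$ with $\phi\colon\S^2\to\S^3$ nonconstant. By Federer's dimension reduction, any singularity would produce, after blowing up and splitting off translation-invariant directions, such a homogeneous minimizer. Maps independent of one variable reduce to minimizing $p$-harmonic maps on $B^2$ (or $B^1$), which are regular for $p\ge p_0$ by the $\varepsilon$-regularity/monotonicity argument in dimension $2$ when $p<2$ and directly for $p\ge 2$. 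Once tangent maps are trivial, the singular set is empty and $C^{1,\alpha}$ regularity follows from the $\varepsilon$-regularity theorem (Hardt--Lin, Fuchs).

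To exclude $\phi$, I would use minimality against a family of competitors in the spirit of Schoen--Uhlenbeck. Take a fixed vector $e\in\R^4$ and the conformal-type deformation $u_t = \frac{u+t\psi(x)(e-\langle u,e\rangle u)}{|\cdot|}$, i.e.\ move $u$ along the gradient of the height function on $\S^3$, weighted by a radial cutoff $\psi(x)=|x|^{-\beta}\eta(x)$ with a free exponent $\beta$. Minimality gives $\frac{d^2}{dt^2}E_p(u_t)\big|_{t=0}\ge0$ for each $e$. Averaging over $e\in\S^3$, the positive curvature of $\S^3$ (target dimension $3\ge$ domain) yields a negative term of size roughly $-(3-p)\int|\nabla u|^p\psi^2$. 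This competes with a positive term $\int|\nabla u|^{p-2}|\nabla\psi|^2$ together with the $p$-dependent cross terms, which come from differentiating $|\nabla u_t|^{p}$ twice and involve $(p-2)|\nabla u|^{p-4}\langle\nabla u,\nabla(\cdot)\rangle^2$.

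On a homogeneous map, $|\nabla u|^2=r^{-2}|\nabla_T\phi|^2$ and $\partial_r u=0$, so all integrals split into a radial one-dimensional integral times $\int_{\S^2}|\nabla_T\phi|^p$. Stability then reduces to a scalar inequality $Q(p,\beta)\ge0$ that must hold for all admissible $\beta$ (where integrability near $0$ fixes the range of $\beta$). The goal is to choose $\beta$ optimally, and possibly to add a tangential correction to the variation in order to exploit $\partial_ru=0$ more sharply, so that $Q(p,\beta)<0$ for every $p\in[p_0,3)$. This contradiction shows $\phi$ is constant. The threshold $p_0=\frac{7-\sqrt{17}}{2}$ is a root of $p^2-7p+8=0$, which suggests that the optimized quadratic form in $\beta$ has discriminant changing sign exactly there.

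The main obstacle is the analysis of the $(p-2)$ cross term, which has no sign. Previous works lost precisely here, which is why they obtained only partial ranges. I would first try to control it pointwise using the fact that, averaged over $e$, $\langle\nabla u,\nabla(\text{variation})\rangle$ can be computed exactly: the average of $\langle\nabla u,e\rangle^2$-type quantities is $\frac14|\nabla u|^2$. This would make the second variation an explicit quadratic expression in $p$ and $\beta$ with no lossy Cauchy--Schwarz step. For $p<2$ I would also check that the competitor has finite energy and that the second variation formula is justified, since $|\nabla u|^{p-2}$ is singular where $\nabla u$ vanishes; this can be handled by regularizing, $(|\nabla u|^2+\delta)^{(p-2)/2}$, and letting $\delta\to0$.
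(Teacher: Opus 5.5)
\textbf{The stability step fails.} With a purely radial weight, the second variation does not reduce to a scalar inequality $Q(p,\beta)\ge 0$. For $u=\phi(x/|x|)$ and radial $\psi$ the two competing terms are
\[
|\nabla u|^{p-2}|\nabla\psi|^2=r^{2-p}|\nabla_T\phi|^{p-2}\psi'(r)^2,
\qquad
|\nabla u|^{p}\psi^2=r^{-p}|\nabla_T\phi|^{p}\psi^2 .
\]
They factor through two different angular integrals, $\int_{\S^2}|\nabla_T\phi|^{p-2}$ and $\int_{\S^2}|\nabla_T\phi|^{p}$.

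The $(p-2)$ cross term you call the main obstacle is harmless here. Since $\langle\nabla u,\nabla\psi\rangle=\psi'\,\partial_r u=0$, after summing over an orthonormal basis of $e$'s it simply merges with the curvature term. The summed second variation becomes $3\int|\nabla u|^{p-2}|\nabla\psi|^2-(3-p)\int|\nabla u|^p\psi^2\ge0$. Optimizing over $\beta$ (or over any radial $f$) is then just the sharp weighted Hardy inequality, with constant $\frac{(3-p)^2}{4}$. So the strongest conclusion you can extract is
\[
\int_{\S^2}|\nabla_T\phi|^{p}\le \frac{3(3-p)}{4}\int_{\S^2}|\nabla_T\phi|^{p-2},
\]
which is \eqref{eq:stabilitywithg} with $g\equiv1$. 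This compares two unrelated quantities and yields no contradiction for any $p$. Your explanation of $p_0$ as a discriminant in $\beta$ is therefore reverse-engineered from the answer, not derived. Adding a vague ``tangential correction'' does not fix this without a genuinely new inequality.

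\textbf{What is missing} is a second, independent relation between different powers of $|\nabla\phi|$, and a test function with angular dependence that can interact with it. The paper proceeds in four steps.
\begin{enumerate}
\item It takes $\varphi=f(r)\,|\nabla\phi|_\delta^{\gamma}|\nabla\phi|$. This gives \eqref{eq:stabilitywithgamma}, whose left side is $\int|\nabla\phi|^{p-2+2\gamma}\{3|\nabla|\nabla\phi||^2+(p-2)|\Delta_\infty\phi|^2\}$.
\item It tests the Bochner--Weitzenb\"ock formula on $\S^2$ against $|\nabla\phi|^{p-2+2\gamma}$ and uses the equation $\Delta\phi=(2-p)\Delta_\infty\phi$. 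This yields \eqref{eq:Bochnerwithgamma}, bounding the Hessian terms by $\frac12\int|\nabla\phi|^{p+2+2\gamma}-\int|\nabla\phi|^{p+2\gamma}$.
\item It bridges the gradient terms of step 1 and the Hessian terms of step 2 with the pointwise mixed Kato--Cauchy--Schwarz inequality \eqref{eq:mixedKCSwithgamma}.
\item Only this combination gives $A(p,\gamma)\int|\nabla\phi|^{p+2+2\gamma}+B(p,\gamma)\int|\nabla\phi|^{p+2\gamma}\le0$. The threshold $p_0$ is a root of the discriminant in $\gamma$ of $A(p,\gamma)\ge0$, in which the Kato constant $C_p=\frac{p+1}{(p+2\gamma)(p-1)}$ is built in.
\end{enumerate}
This machinery needs $\phi\in C^1$ and the second-derivative information of Lemma~\ref{second derivatives}.

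\textbf{The two-dimensional step also needs a real argument.} It is what delivers $\phi\in C^1$ when $p<2$. Appealing to $\varepsilon$-regularity and monotonicity in dimension $2$ does not suffice there, since for $p<n$ minimizers can be singular (e.g.\ $x/|x|\colon B^2\to\S^1$). You must show that a nonconstant homogeneous minimizer $\R^2\to\S^3$ has as profile a great circle traversed with integer speed $k\neq0$. The one-dimensional stability inequality then forces $k^2\le\frac{3(2-p)^2}{4(3-p)}<1$, a contradiction.
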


Combined with the results of Hardt--Lin \cite{HLp} (regularity in the case $p=n=3$), Theorem \ref{regularity intro} establishes regularity of $p$-harmonic maps from $B^3$ to $\S^3$
for all $p\geq p_0$.

\subsection*{Acknowledgements}
The project is co-financed by:
\begin{itemize}
 \item the Polish National Agency for Academic Exchange within Polish Returns Programme -- BPN/PPO/2021/1/00019/U/00001 (K.M., P.T.)
 \item the National Science Centre grant 2022/01/1/ST1/00021 (K.M., P.T.)
 \item the National Science Centre grant 2025/59/D/ST1/03316 (M.M.)
\end{itemize}

\textbf{Usage of LLM:} The authors declare that no generative artificial intelligence (AI) tools, large language models, or automated proof systems were used in the conception, derivation, proof, or any other mathematical content of this work.

\section{Outline of the Proof}\label{outline} The regularity theory of Scheon--Uhlenbeck of minimizing maps relies on the study of minimizing tangent maps.

\begin{definition}
    A $p$-harmonic map $u\colon \R^n\to \n$ is called a \textit{tangent map} if it is $0$-homogeneous.
    If additionally $u$ minimizes the $p$-energy $E_p$ on compact subset of $\R^n$, we call $u$ a
    \textit{minimizing $p$-harmonic tangent map}.
\end{definition}

The following theorem makes this connection precise.

\begin{theorem}[{\cite[Theorem~IV]{SU1}, \cite[Theorem 4.5.]{HLp}}]\label{hl thm}
Suppose $\ell$ is the largest integer such that any minimizing $p$-harmonic tangent map from the unit ball in $\R^j$
into $\S^k$ is a constant map for each $j=1,\dots,\ell$.
Then the interior singular set of any $p$-minimizer $u\in W^{1,p}(B^n,\S^k)$ is empty in case $n<\ell+1$, is a discrete set in case $n=1+\ell$,
and has Hausdorff dimension at most $n-\ell-1$ in case $n\geq \ell+1$. Moreover, $\ell\geq [p]$.
\end{theorem}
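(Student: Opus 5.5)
This is the classical dimension reduction argument of Federer as adapted by Schoen--Uhlenbeck and Hardt--Lin, and I would prove it by induction on the dimension using blow-ups.

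\textbf{Step 1: compactness of minimizers.} First I establish the compactness tool. If $u_i$ are $p$-minimizers on $B_2$ with uniformly bounded energy, then a subsequence converges strongly in $W^{1,p}_{\mathrm{loc}}$ to a $p$-minimizer $u$. Moreover, singular points converge to singular points: if $x_i\in\sing u_i$ and $x_i\to x$, then $x\in\sing u$. The strong convergence comes from a comparison/extension (Luckhaus-type) lemma, which shows that energy cannot be lost in the limit. The closedness of singular sets follows from the $\varepsilon$-regularity theorem, which says that small normalized energy $r^{p-n}\int_{B_r}|\nabla u|^p$ implies $C^{1,\alpha}$ regularity.

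\textbf{Step 2: tangent maps.} Next I use the monotonicity formula for $r\mapsto r^{p-n}\int_{B_r(x)}|\nabla u|^p$. It shows that every blow-up limit $u_{x,r_i}(y)=u(x+r_iy)$ is a $0$-homogeneous minimizing tangent map. It is constant exactly when $x$ is a regular point, again by $\varepsilon$-regularity.

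\textbf{Step 3: stratification.} For a homogeneous minimizer $\varphi$, the set of points along which it is translation invariant is a linear subspace $S(\varphi)$. Suppose some $y\neq0$ is a singular point of $\varphi$. Blowing up $\varphi$ at $y$ produces a tangent map that is invariant in the direction $y$, in addition to being homogeneous. This reduces it to a homogeneous minimizer in one dimension lower.

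\textbf{Step 4: the induction and the main obstacle.} Federer's reduction argument then shows that $\dim_{\mathcal H}\sing u\le n-\ell-1$, by contradiction. Suppose $\mathcal H^{s}(\sing u)>0$ for some $s>n-\ell-1$. Take a point of positive upper density and blow up. Iterating Step 3 produces a homogeneous minimizer invariant along $n-\ell-1+\delta$ directions, with $\delta>0$ forced by the density. This minimizer is then a tangent map in dimension $j\le\ell$ with nonempty singular set, since the origin remains singular by Step 1. But any such map is constant by the definition of $\ell$, a contradiction.

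The cases $n<\ell+1$ and $n=\ell+1$ come from the same argument. If $n=\ell+1$ and $\sing u$ had an accumulation point $x$, blow up at $x$ along the scales $|x_i-x|$. The limit is a nonconstant tangent map with a singular point on the unit sphere. Reducing at that point gives a nonconstant tangent map in $\R^{\ell}$, which is impossible.

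The bound $\ell\ge[p]$ follows from Morrey/Sobolev embedding for $j<p$. It also uses the fact that a $0$-homogeneous $W^{1,p}$ map on $B^j$ with $j\le p$ has $\int|\nabla u|^p=\infty$ unless it is constant, since $\int_{B_1}|x|^{-p}\dx$ diverges for $p\ge j$.

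I expect Step 1 to be the main obstacle, specifically the strong convergence of minimizers via the extension lemma. Everything else is the standard measure-theoretic reduction.
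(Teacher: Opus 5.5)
Your proposal is correct and is the Federer dimension-reduction argument by which the cited results \cite[Theorem~IV]{SU1} and \cite[Theorem~4.5]{HLp} are proved; the paper gives no proof of its own and simply quotes them. The steps you leave implicit are standard. First, a minimizer on $\R^n$ invariant along a $d$-dimensional subspace induces a minimizing tangent map on $\R^{n-d}$. Second, $\mathcal H^s_\infty$ of the singular sets is upper semicontinuous under blow-up, which is what makes your phrase ``$\delta>0$ forced by the density'' precise: it yields an integer invariance dimension $d\ge s>n-\ell-1$, i.e.\ $d\ge n-\ell$.
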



The proof of Theorem \ref{regularity intro} follows a similar strategy as \cite{GMM}, which in turn follows
the arguments of Schoen and Uhlenbeck \cite{SU3}. In view of Theorem \ref{hl thm}, it suffices to study minimizing $p$-harmonic tangent maps
$u\colon \S^2\to\S^3$ (since such maps are $0$-homogeneous, it suffices to consider their restriction to the sphere $\S^2$).
Once we show that all such maps are constant, regularity follows. 

Therefore, let $u\colon \S^2\to\S^3$ be a minimizing $p$-harmonic tangent map. As a corollary of Theorem \ref{hl thm}, we obtain that
$u$ is of class $C^1$, see \Cref{C1}. This allows us to apply the results of Duzaar--Fuchs \cite{DuzaarFuchs} and Gastel--Grotowski--Kronz \cite{gasteletal}
(see Lemma \ref{second derivatives} below),
which provide information about the second derivatives of $u$.
By perturbing $u$, computing the second variation of the $p$-energy, and exploiting the stability condition, we obtain the inequality
\begin{equation}\label{stab}
\begin{split}
 (1+\gamma)^2&\int_{\S^2} |\nabla u|^{p-2+2\gamma}\big\{ 3|\nabla|\nabla u||^2+(p-2)\abs{\Delta_\infty u}^2\big\}\\
 & \quad\ge (3-p)\int_{\S^2} |\nabla u|^{p+2+2\gamma}-\frac 34 (3-p)^2\int_{\S^2}|\nabla u|^{p+2\gamma},
\end{split}
\end{equation}
where $\gamma$ is a nonpositive parameter over which we will optimize, and
\begin{equation} \label{eq:infinity-laplacian}   
\Delta_\infty u\coloneqq \left\langle \frac{\nabla u}{|\nabla u|}, \nabla |\nabla u|\right\rangle 
= \nabla^2 u \left( \frac{\nabla u}{|\nabla u|}, \frac{\nabla u}{|\nabla u|} \right)
\end{equation}
is the normalized $\infty$-Laplacian of $u$. 
Then, using the Bochner--Weitzenb\"ock formula together with the $p$-harmonic map equation for $u$, we derive
 \begin{equation}\label{boch}
 \begin{split}
   \int_{\S^2} |\nabla u|^{p-2+2\gamma}&\bigg\{|\nabla^2 u|^2+(p-2+2\gamma)|\nabla |\nabla u||^2+2\gamma(p-2)\abs{\Delta_\infty u}^2 \bigg\}  \\
  &\le \frac12 \int_{\S^2} |\nabla u|^{p+2+2\gamma} - \int_{\S^2}|\nabla u|^{p+2\gamma}.
 \end{split}
\end{equation}
Our goal is to combine these two inequalities as efficiently as possible. To this end, we derive the mixed Kato--Cauchy--Schwarz inequality
\begin{equation}\label{KCS}
    \begin{split}
        &3|\nabla |\nabla u||^2 + (p-2) \abs{\Delta_\infty u}^2 \\
        &\le C_p \left( |\nabla^2 u|^2+(p-2+2\gamma)|\nabla |\nabla u||^2+2\gamma(p-2)\abs{\Delta_\infty u}^2\right)
    \end{split}
\end{equation}
 with the constant
 \[
 C_p \coloneqq \begin{dcases}
 \frac{p+1}{(p + 2\gamma)(p-1)}\quad&\text{if $1<p<2$,}
 \\
\frac{3}{p+2\gamma}&\text{if $2\leq p<3$.}
\end{dcases}
 \]
The inequality \eqref{KCS} gives a pointwise bound between the integrands in \eqref{stab} and \eqref{boch}.
By combining these three inequalities, we obtain
 \begin{equation}\label{A,B estimate intro}
     A(p,\gamma)\int_{\S^2}\abs{\nabla u}^{p+2+2\gamma}+B(p,\gamma)\int_{\S^2}\abs{\nabla u}^{p+2\gamma}\leq 0
 \end{equation}
for some numbers $A(p,\gamma)$ and $B(p,\gamma)$. We then wish to identify the range of $p$ for which there
exists an admissible parameter $\gamma$ such that both $A(p,\gamma)$ and $B(p,\gamma)$ are nonnegative, and at least one of them is positive.
For such a pair $(p,\gamma)$, the estimate \eqref{A,B estimate intro} forces $u$ to be constant,
which in turn implies that all minimizing $p$-harmonic maps from $B^3$ to $\S^3$ are regular. We show that such $\gamma$
can be found, provided
that $p\geq p_0$, where $p_0\coloneqq\tfrac{7 - \sqrt{17}}{2} \approx 1.438$.
Even though the inequalities \eqref{stab} and \eqref{boch} are the natural starting points, proving them rigorously requires reversing
the order. Therefore, we prove \eqref{KCS} first, and only then \eqref{boch} and \eqref{stab}.

\underline{There is one major obstacle in completing this approach}: insufficient regularity of the quantities involved. First, if we choose $\gamma$ optimally, the term $|\nabla u|^{p-2+2\gamma}$ in both \eqref{stab} and \eqref{boch} may have a negative exponent. For this reason, we introduce an additional regularization parameter $\delta \to 0$ and carefully consider the limiting behavior as $\delta \to 0$.

Second, $p$-minimizers in general do not have second derivatives that we exploit above.
However, once we know that a~$p$-minimizer is of class $C^1$,
the following results apply:

\begin{lemma}\label{second derivatives}
    Let $\Omega\subset\R^n$ be an open domain and let $\mathcal{N}\subset\R^{k+1}$ be a Riemannian manifold.
    Let $p>1$ and $u\in W^{1,p}(\Omega,\mathcal{N})$ be a $C^1$ weakly $p$-harmonic map. 
    \begin{enumerate}[label=$(\arabic*)$]
        \item\label{it:DuzaarFuchs} If $p\geq 2$, then $|\nabla u|^{(p-2)/2}\nabla u\in W^{1,2}_\textnormal{loc}(\Omega)$. \textnormal{\cite[Lemma 2.2.]{DuzaarFuchs}}    
        \item\label{it:Gasteletal} If $1<p<2$, then $u\in W^{2,p}_\textnormal{loc}(\Omega,\cN)$ and $|\nabla u|^{p-2}|\nabla^2 u|^2\in L^1_\textnormal{loc}(\Omega)$. \textnormal{\cite[Lemma~2]{gasteletal}} 
    \end{enumerate}
    Moreover, for $1<p<3$ we have $u\in W^{2,2}_{loc}(\Omega)$.
\end{lemma}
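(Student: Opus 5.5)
Items \ref{it:DuzaarFuchs} and \ref{it:Gasteletal} are quoted from \cite{DuzaarFuchs} and \cite{gasteletal}, so the only new claim is the final one: $u\in W^{2,2}_{loc}(\Omega)$ for $1<p<3$. I would handle the two ranges of $p$ separately and use that $u\in C^1$, so $|\nabla u|$ is locally bounded.

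\emph{Case $1<p<2$.} By \ref{it:Gasteletal}, $|\nabla u|^{p-2}|\nabla^2 u|^2\in L^1_{loc}$. Since $p-2<0$ and $|\nabla u|\le M$ on a compact $K\Subset\Omega$, we have $|\nabla u|^{p-2}\ge M^{p-2}>0$ there. Hence
\[
|\nabla^2 u|^2\le M^{2-p}\,|\nabla u|^{p-2}|\nabla^2 u|^2\in L^1(K),
\]
which gives $u\in W^{2,2}_{loc}$. (The $W^{2,p}_{loc}$ statement ensures $\nabla^2u$ is a genuine weak derivative, so the pointwise inequality is meaningful.)

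\emph{Case $2\le p<3$.} If $p=2$ this is \ref{it:DuzaarFuchs} directly, so assume $2<p<3$. Set $V\coloneqq|\nabla u|^{(p-2)/2}\nabla u\in W^{1,2}_{loc}$, which is continuous. Write $\nabla u=\Phi(V)$ with $\Phi(\xi)=|\xi|^{-\frac{p-2}{p}}\xi$. This map is $C^1$ away from $0$ and satisfies $|D\Phi(\xi)|\lesssim|\xi|^{-(p-2)/p}=|\nabla u|^{-(p-2)/2}$. On the open set $\{\nabla u\ne0\}$ the chain rule gives
\[
|\nabla^2u|\lesssim|\nabla u|^{-(p-2)/2}|\nabla V|,
\]
and this factor blows up at zeros of $\nabla u$. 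This is the main obstacle: controlling $\nabla^2 u$ near the critical set $Z=\{\nabla u=0\}$.

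The approach I would try first is to re-run the Duzaar--Fuchs difference-quotient or regularization argument with a weaker weight. One tests the (regularized) equation with $\Delta_{-h}\big((\varepsilon+|\nabla u|^2)^{s}\Delta_h u\,\eta^2\big)$ for suitable $s\in(-\tfrac{p-2}{2},0]$ to obtain uniform bounds on $\int(\varepsilon+|\nabla u|^2)^{(p-2)/2+s}|\nabla^2 u|^2$. The exponent should reach $0$ precisely when $p-2<1$, that is $p<3$; this is where the hypothesis $p<3$ enters. The constraint on $s$ comes from the ellipticity ratio: the Cordes-type condition $1+s(p-2)/(\ldots)>0$ should hold for $|s|$ up to roughly $(p-2)/2$ when $p<3$.

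A cleaner alternative is to argue via the chain-rule bound together with a Caccioppoli-type estimate of $\int_{\{|\nabla u|<t\}}|\nabla V|^2$ on sublevel sets. Combining this with the coarea formula, one sums the dyadic pieces $\int_{\{2^{-k-1}<|\nabla u|<2^{-k}\}}|\nabla u|^{2-p}|\nabla V|^2\lesssim 2^{-k(3-p)}$, which converges since $p<3$. It remains to treat $Z$ itself: the set of $x\in Z$ where $\nabla^2u\neq0$ is Lebesgue null for $W^{1,1}_{loc}$ functions, at least once $\nabla u\in W^{1,1}_{loc}$ is established by the same dyadic sum taken with exponent $1$. Either route gives $\nabla^2 u\in L^2_{loc}$; I expect the weighted energy estimate near $Z$ to be the delicate step.
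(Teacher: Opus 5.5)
Your treatment of $1<p<2$ is the same as the paper's, and $p=2$ is indeed immediate from item $(1)$. For $2<p<3$, however, the proposal stops at the decisive step. Your dyadic bound $\lesssim 2^{-k(3-p)}$ amounts to the sublevel estimate
\[
\int_{K\cap\{0<|\nabla u|<t\}}|\nabla V|^2\le C_K\,t\qquad(0<t\le 1),
\]
and nothing in the proposal derives it. You do not say which Caccioppoli inequality produces the factor $t$. Your sketch also never accounts for the right-hand side of the Euler--Lagrange system $\div(|\nabla u|^{p-2}\nabla u)=f$, where $f$ comes from the second fundamental form. Moreover, truncated test functions such as $\nabla u\,\min\{1,t/|\nabla u|\}$ are not admissible near $Z=\{\nabla u=0\}$ unless one already knows $\nabla^2u\in L^2$ there. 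Route 1 is only a heuristic (``should'', ``roughly'') and is not carried out. Finally, bounding $\int_{\{\nabla u\neq 0\}}|\nabla^2u|$ does not by itself give $\nabla u\in W^{1,1}_{\mathrm{loc}}$, since the distributional derivative of $\nabla u$ could a priori charge $Z$. Stampacchia's lemma only becomes usable after that has been ruled out.

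The gap can be closed along your route 2. By item $(1)$, $\sigma\coloneqq|\nabla u|^{p-2}\nabla u=|V|^{(p-2)/p}V\in W^{1,2}_{\mathrm{loc}}$. Also $f$ is quadratic in $V$ with $C^1$ coefficients, so $f\in W^{1,1}_{\mathrm{loc}}$. Hence the differentiated system $\int\partial_k\sigma_j\cdot\partial_j\phi=\int\partial_kf\cdot\phi$ holds for bounded, compactly supported $\phi\in W^{1,2}$. Test it with $\phi=\partial_ku\,\psi(|\nabla u|)\,\eta^2$ (summed over $k$), where $\psi(\tau)=\chi(\tau/\eps)\min\{1,t/\tau\}$ and $\chi$ is nondecreasing, vanishes near $0$, and equals $1$ on $[2,\infty)$. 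This $\phi$ is a Lipschitz function of $V$, hence admissible. For $p\ge 2$, the terms containing $\chi'$ and the contribution of $\{|\nabla u|>t\}$ are nonnegative (the latter by Kato's inequality). Every remaining error term carries the factor $|\nabla u|\,\psi(|\nabla u|)\le t$ times $|\nabla\sigma|$ or $|\nabla f|$. This gives $\int_{\{2\eps<|\nabla u|<t\}}|\nabla u|^{p-2}|\nabla^2u|^2\eta^2\le Ct$ uniformly in $\eps$, and your dyadic sum then converges precisely because $p<3$. To dispose of $Z$, replace $\Phi$ by the Lipschitz maps $\Phi_\eps(\xi)=\max\{|\xi|,\eps\}^{-(p-2)/p}\xi$: then $\Phi_\eps(V)$ is bounded in $W^{1,2}_{\mathrm{loc}}$ and converges uniformly to $\nabla u$.

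The paper does not argue this way. It uses the $C^{1,\alpha}$ regularity of Hardt--Lin \cite{HLp} together with item $(1)$ to check that $f\in W^{1,1}_{\mathrm{loc}}\cap C^{0,\beta}_{\mathrm{loc}}$. It then invokes a known second-order regularity theorem for $\div(|\nabla u|^{p-2}\nabla u)=f$ with $p<3$ \cite{MR4825227}. That is a one-line reduction to the literature. The completed version of your route is self-contained and needs neither the H\"older continuity of $f$ nor the $C^{1,\alpha}$ upgrade.
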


\begin{proof}
If $1<p<2$, the local boundedness of $|\nabla u|$, together
with the weighted estimate in \ref{it:Gasteletal}, implies that
$u\in W^{2,2}_{\mathrm{loc}}(\Omega,\mathcal N)$.

It remains to consider $2<p<3$.
By \cite[Theorem~3.1]{HLp},
$u\in C^{1,\alpha}_{\mathrm{loc}}(\Omega,\mathcal N)$
for some $\alpha\in(0,1)$.
Let us write the Euler--Lagrange equation as
\[
\div\brac{|\nabla u|^{p-2}\nabla u}=f.
\]
The right-hand side is a quadratic expression in the components of $|\nabla u|^{(p-2)/2}\nabla u$,
with coefficients determined by the second fundamental form
of $\mathcal N$ evaluated at $u$.
These coefficients are locally $C^1$.
Thus, assertion \ref{it:DuzaarFuchs} give
$f\in W^{1,1}_{\mathrm{loc}}(\Omega)$.
Moreover, the $C^{1,\alpha}_{\mathrm{loc}}$ regularity of $u$
implies that $f\in C^{0,\beta}_{\mathrm{loc}}(\Omega)$
for some $\beta>0$. We may therefore apply
\cite[Theorem~2.2(ii)]{MR4825227}
to conclude that
$u\in W^{2,2}_{\mathrm{loc}}(\Omega,\mathcal N)$;
see also \cite{Cellina,Miskiewicz}.
\end{proof}

Furthermore, if a minimizing $p$-harmonic map $u\colon \Omega\to \S^k$ is $C^1$, then, by the regularity theory of Hardt and Lin \cite{HLp},
it is automatically $C^{1,\alpha}$ for some $\alpha\in(0,1)$. 
On each of the sets $\Omega_\eps = \{ |\nabla u| > \eps \}$ the $p$-harmonic map equation is uniformly elliptic, and can use the standard elliptic theory to bootstrap $C^{1,\alpha}$ to $C^\infty$. 
In summary, we have the following Lemma:

\begin{lemma}\label{smooth u}
    Let $u\in W^{1,p}(\Omega,\S^k)$ be a $C^1$ minimizing $p$-harmonic map. Then $u$ is smooth on the open set $\{|\nabla u|>0\}$.
\end{lemma}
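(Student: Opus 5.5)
The plan is to upgrade $C^1$ to $C^{1,\alpha}$ first, and then bootstrap on the open set $\{|\nabla u|>0\}$, where the equation is nondegenerate. Since $u$ is a $C^1$ minimizer into $\S^k$, the Hardt--Lin theory \cite[Theorem~3.1]{HLp} gives $u\in C^{1,\alpha}_{\mathrm{loc}}(\Omega)$ for some $\alpha\in(0,1)$. For target $\S^k$ the Euler--Lagrange system reads
\[
-\div\brac{|\nabla u|^{p-2}\nabla u}=|\nabla u|^{p}\,u .
\]
Fix $x_0$ with $|\nabla u(x_0)|>0$. By continuity of $\nabla u$ there is a ball $B=B_r(x_0)$ and $\eps>0$ with $|\nabla u|>\eps$ on $B$. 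Since $u\in C^1$ (and $u\in W^{2,2}_{\mathrm{loc}}$ by Lemma~\ref{second derivatives}), $|\nabla u|$ is also bounded above on $B$.

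Next I would obtain second derivatives and a linear system on $B$. By Lemma~\ref{second derivatives} we have $u\in W^{2,2}(B)$, so the equation can be differentiated in the weak sense. This is the standard difference-quotient argument: on $B$ the map $\xi\mapsto|\xi|^{p-2}\xi$ is smooth and uniformly monotone for $|\xi|\in[\eps,M]$. Differentiating in $x_j$, the function $w=\partial_j u$ solves
\[
-\div\brac{A(\nabla u)\nabla w}=\partial_j\brac{|\nabla u|^p u},\qquad A(\xi)=|\xi|^{p-2}\brac{\mathrm{Id}+(p-2)\frac{\xi\otimes\xi}{|\xi|^2}},
\]
where $A(\xi)$ acts on each component in the same way. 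Because it acts componentwise, the system decouples in its principal part. The ellipticity constants are $\min(1,p-1)\eps^{p-2}$ and $\max(1,p-1)M^{p-2}$, so $A$ is uniformly elliptic on $B$. Its coefficients $A(\nabla u)$ are $C^{0,\alpha}$, since $A$ is smooth away from $\xi=0$ and $\nabla u\in C^{0,\alpha}$.

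Then I would run the Schauder bootstrap, writing the right-hand side in divergence form $\div F$ with $F=|\nabla u|^p u\, e_j$, which lies in $C^{0,\alpha}$. Schauder estimates for divergence-form equations with Hölder coefficients give $w\in C^{1,\alpha}$, i.e.\ $u\in C^{2,\alpha}(B')$ on a smaller ball $B'$. Now the coefficients $A(\nabla u)$ and $F$ are $C^{1,\alpha}$, so the equation can be rewritten in nondivergence form, or differentiated once more, to get $u\in C^{3,\alpha}$. Inductively, $u\in C^{k,\alpha}$ implies that the coefficients are $C^{k-1,\alpha}$, which by higher-order Schauder estimates gives $u\in C^{k+1,\alpha}$. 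Hence $u\in C^\infty$ near $x_0$, and since $x_0$ was arbitrary, on all of $\{|\nabla u|>0\}$.

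The main obstacle is the first step: justifying the differentiated equation, that is, the passage from the weak Euler--Lagrange equation to a linear system for $\partial_j u$ with merely $C^{0,\alpha}$ coefficients. I would handle it either by invoking the $W^{2,2}_{\mathrm{loc}}$ conclusion of Lemma~\ref{second derivatives}, or directly by difference quotients, using the local uniform ellipticity on $B$ to obtain $W^{2,2}(B')$. After that, everything is classical linear theory.
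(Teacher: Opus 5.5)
Your proposal takes the same route as the paper: Hardt--Lin upgrades $C^1$ to $C^{1,\alpha}$, the equation is uniformly elliptic on $\{|\nabla u|>\eps\}$, and a Schauder bootstrap of the differentiated equation gives $C^\infty$; you just fill in what the paper calls ``standard elliptic theory''. One correction: for vector-valued $u$ the linearized operator does not act componentwise, because $\partial_j\brac{|\nabla u|^{p-2}\partial_i u^\alpha}$ contains $(p-2)|\nabla u|^{p-4}\big(\sum_{\beta,k}\partial_k u^\beta\,\partial_k w^\beta\big)\partial_i u^\alpha$, so the system is coupled and you need Schauder estimates for linear elliptic systems, which do apply since $A(\nabla u)$ satisfies the Legendre condition with your bounds (with $\eps^{p-2}$ and $M^{p-2}$ interchanged when $p<2$).
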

%
\section{Mixed Kato--Cauchy--Schwarz Inequality}

In what follows, we consider $p>1.3$. The reason is that numerical experiments show that our method yields regularity only for $p\geq p_0\approx 1.44$,
and the assumption $p>1.3$ slightly simplifies the proof. It is possible to extend the theorems below to $p>1$, but the range of admissible
parameters $\gamma$ would be more complicated.

\begin{lemma}[Mixed Kato--Cauchy--Schwarz Inequality]\label{le:mixedKCSwithgamma}
Let $u \colon \S^2\to \S^3$ be a $C^1$ minimizing $p$-harmonic tangent map for some $1.3<p<3$, and let $\gamma\in (-\frac{1}{2},0]$ if $p\in(1.3,2)$ and
$\gamma\in (-1,0]$ if $p\in[2,3)$.
Then at all points where $\nabla u \neq 0$ we have the estimate
\begin{equation}\label{eq:mixedKCSwithgamma}
    \begin{split}
        &3|\nabla |\nabla u||^2 + (p-2) \abs{\Delta_\infty u}^2 \\
        &\le C_p \left( |\nabla^2 u|^2+(p-2+2\gamma)|\nabla |\nabla u||^2+2\gamma(p-2)\abs{\Delta_\infty u}^2\right),
    \end{split}
\end{equation}
 where 
 \[
 C_p \coloneqq \begin{dcases}
 \frac{p+1}{(p + 2\gamma)(p-1)}\quad&\text{if $1<p<2$,}
 \\
\frac{3}{p+2\gamma}&\text{if $2\leq p<3$.}
\end{dcases}
 \]
\end{lemma}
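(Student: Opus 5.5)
The inequality is pointwise and purely algebraic once we use the equation satisfied by $u$, so I would fix a point $x_0\in\S^2$ with $\nabla u(x_0)\neq 0$ and work there. By Lemma~\ref{smooth u}, $u$ is smooth near $x_0$, so all quantities are classical.

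\emph{Setting up frames.} Choose an orthonormal frame $e_1,e_2$ of $T_{x_0}\S^2$ and an orthonormal frame $f_1,f_2,f_3$ of $T_{u(x_0)}\S^3$ diagonalizing $\nabla u(x_0)$. This is a singular value decomposition, so $u_{i\alpha}=\lambda_i\delta_{i\alpha}$ with $\lambda_1\ge\lambda_2\ge0$, and $|\nabla u|^2=\lambda_1^2+\lambda_2^2$. Write $H=(u_{ij\alpha})$ for the covariant Hessian, which is symmetric in $i,j$. Then
\[
|\nabla u|\,\nabla_k|\nabla u|=\sum_i\lambda_i u_{ik i},
\qquad
\Delta_\infty u=|\nabla u|^{-1}\sum_k \lambda_k\nabla_k|\nabla u| .
\]
So both $v\coloneqq\nabla|\nabla u|\in\R^2$ and $\Delta_\infty u=\langle w,v\rangle$, with $w=(\lambda_1,\lambda_2)/|\nabla u|$, depend only on the components $u_{iki}$.

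\emph{Using the equation.} Since $u$ is $0$-homogeneous, the radial contribution to $\div(|\nabla u|^{p-2}\nabla u)$ vanishes. Hence the tangential part of the $p$-harmonic map equation on $\S^2$ reads
\[
\sum_i u_{ii\alpha}+(p-2)\,|\nabla u|^{-1}\sum_i u_{i\alpha}\nabla_i|\nabla u|=0,\qquad \alpha=1,2,3 .
\]
This is a linear constraint on $H$. In the diagonal frame it becomes $u_{11\alpha}+u_{22\alpha}=-(p-2)\lambda_\alpha v_\alpha/|\nabla u|$ for $\alpha=1,2$, and $u_{113}+u_{223}=0$.

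\emph{Refined Kato step.} This is the analogue of the classical refined Kato inequality for harmonic functions. For fixed $v$, I would minimize $|H|^2$ over all symmetric $H$ satisfying the constraint and the two linear relations $\sum_i\lambda_i u_{iki}=|\nabla u| v_k$. This is the minimization of a quadratic function on an affine subspace, so it is an orthogonal projection. The result is an explicit lower bound $|H|^2\ge Q_{\lambda}(v)$, where $Q_\lambda$ is a quadratic form in $v$ depending on the ratio $t=\lambda_2/\lambda_1\in[0,1]$ and on $p$. Components such as $u_{12 3}$ and the $f_3$-components can be set to zero at the minimum, so only a few free variables remain. After this, \eqref{eq:mixedKCSwithgamma} reduces to the claim that the $2\times2$ symmetric matrix
\[
C_p\big(Q_\lambda+(p-2+2\gamma)I+2\gamma(p-2)\,w\otimes w\big)-3I-(p-2)\,w\otimes w
\]
is positive semidefinite for every $t\in[0,1]$. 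The hypothesis $p+2\gamma>0$, which is guaranteed by the stated ranges of $\gamma$ and $p>1.3$, makes $C_p$ positive and finite.

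\emph{Main obstacle.} I expect the hardest step to be the final verification: checking nonnegativity of the trace and determinant of this $2\times2$ matrix uniformly in $t$, and identifying the worst case. My guess is that for $p\ge2$ the extremal configuration is $v\perp w$ (the $\Delta_\infty$ term does not help), which gives the constant $3/(p+2\gamma)$. For $1<p<2$ the worst case should be $v\parallel w$ with $t=0$, i.e.\ rank-one $\nabla u$, where the equation forces the sharper Kato-type constant $\frac{p+1}{p-1}$ and hence $C_p=\frac{p+1}{(p+2\gamma)(p-1)}$. I would first compute these two extreme configurations explicitly to confirm the constants. Then I would show that the determinant is monotone, or at least nonnegative, in $t$. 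The restrictions $\gamma>-\tfrac12$ for $p<2$ and $\gamma>-1$ for $p\ge2$ should enter exactly in ensuring nonnegativity of the diagonal entries.
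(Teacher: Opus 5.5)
\textbf{There is a genuine gap.} Your strategy matches the paper's: work at one point in singular-value frames, treat the equation as a linear constraint on the Hessian, minimise out the free Hessian directions, and reduce to a small semidefiniteness check. The paper minimises out one rotated Hessian coordinate in exactly this way. Two things go wrong, however.

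First, $\Delta_\infty u$ is target-valued, not a scalar. Your own form of the equation says $(\Delta_\infty u)_\alpha=\lambda_\alpha v_\alpha/|\nabla u|$. So $|\Delta_\infty u|^2=w_1^2v_1^2+w_2^2v_2^2$, not $\langle w,v\rangle^2$, and it is this vector that is paired with $\Delta u$ in the Bochner and stability inequalities. Once $w\otimes w$ is replaced by $\mathrm{diag}(w_1^2,w_2^2)$, everything decouples. The Hessian splits into three independent blocks: $(u_{111},u_{122},u_{221})$, which sees only $v_1$; $(u_{222},u_{121},u_{112})$, which sees only $v_2$; and the $f_3$-block. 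Hence $Q_\lambda$ and your matrix are diagonal. With $s=\lambda_\alpha^2/|\nabla u|^2$, each diagonal entry is the paper's condition
\[
C\ \ge\ f(s)\coloneqq\frac{2\bigl(3+s(p-2)\bigr)}{2(p+2\gamma)\bigl(1+s(p-2)\bigr)-s(1-s)(p-2)^2},\qquad s\in[0,1].
\]
This is an equivalence: the inequality holds for all Hessians compatible with the equation if and only if $C\ge\sup_{[0,1]}f$.

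Second, you only guess where $\sup f$ is attained, and for $p<2$ the guess fails on part of the stated range. For $p\ge2$, $\frac{3}{p+2\gamma}-f(s)$ has the sign of $s(p-2)\bigl(4(p+2\gamma)-3(1-s)(p-2)\bigr)$. This is $\ge0$ when $\gamma>-1$, so your $p\ge2$ guess is right. For $p<2$, $f(1)-f(s)$ has the sign of $(1-s)(2-p)\bigl(4(p+2\gamma)-s(p+1)(2-p)\bigr)$. Hence $\sup f=f(1)=\frac{p+1}{(p-1)(p+2\gamma)}$ if and only if $p^2+3p-2+8\gamma\ge0$. This condition fails for $p<\frac{\sqrt{33}-3}{2}\approx1.372$ and $\gamma$ close to $-\frac12$.

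For instance, take $p=1.31$ and $\gamma=-0.49$. Then $f(0.9)\approx22.95>C_p\approx22.58$. Concretely, take $|\nabla u|=1$, $\lambda=(\sqrt{0.9},\sqrt{0.1})$, $u_{111}\approx0.9814$, $u_{122}\approx0.2180$, $u_{221}\approx-0.3268$, and all other Hessian entries zero. This satisfies the equation and has $\nabla|\nabla u|=e_1$. It gives $2.379$ on the left of \eqref{eq:mixedKCSwithgamma} versus $\approx2.340$ on the right. So the monotonicity in $t$ you plan to prove is false there, and no pointwise argument can give the stated $C_p$.

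The paper's proof has the same hole: it asserts that the condition is equivalent to $C\ge\frac{p+1}{(p-1)(p+2\gamma)}$ without examining interior $\theta_1$. The repair is to require $\gamma\ge-\frac{p^2+3p-2}{8}$ when $p<2$, or to take $C=\max f$. This still covers the parameters actually used later:
\begin{itemize}
\item $\gamma_0=1-p_0$ for $p\ge p_0$, since $p_0^2+3p_0-2=10(p_0-1)>8(p_0-1)$;
\item $\gamma=2-p$ for $p\ge2$.
\end{itemize}
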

\begin{proof}
    We follow the proof of \cite[Lemma 5.2 (c)]{MM}. Let $q\in \S^2$ be such that $\nabla u(q)\neq 0$.
    By Lemma \ref{smooth u}, $u$ is smooth ina neighborhood of $q$.
    Since \eqref{eq:mixedKCSwithgamma} is a pointwise inequality, we shall work at this one particular point $q$.
    First, choose exponential coordinates around $q\in\S^2$ and $u(q)\in \S^3$. Then, the $p$-harmonic map equation at the point $q$
    simplifies to
    \begin{equation}\label{exp pharmonic}
        0=  \partial_{ii}u^\alpha+(p-2) \partial_{i}u^\alpha \partial_{j}u^\beta \partial_{ij}u^\beta\quad \text{for $\alpha=1,2,3$},
    \end{equation}
    where $\alpha,\beta$ denote the coordinates in the target space, $i,j$ denote the coordinates in the base space, and we sum over repeated indices. For the sake of simplicity, we may assume $|\nabla u(q)|=1$, as otherwise we would work with the vector $\frac{\nabla u(q)}{|\nabla u(q)|}$.
    Note that we still have freedom to choose orthonormal bases for the tangent spaces $T_q\S^2$ and $T_{u(q)}\S^3$.  
    First, choose an orthonormal basis $e_1,e_2$ for the tangent space $T_q\S^2$ for which the vectors $\partial_1u=\nabla u(q)\cdot e_1$
    and $\partial_2 u=\nabla u(q)\cdot e_2$ in $T_{u(q)}\S^3$ are orthogonal. Then, choose an orthonormal basis $E_1,E_2,E_3$ for $T_{u(q)}\S^3$ for
    which $\partial_1 u=a_1E_1$ and $\partial_2 u=a_2 E_2$ for some $a_1,a_2\geq0$. With respect to these bases,
    \begin{equation}
        \nabla u(q)=\begin{pmatrix}
            a_1 & 0
            \\
            0 & a_2
            \\
            0  & 0 
        \end{pmatrix},
    \end{equation}
    and the system \eqref{exp pharmonic} takes the form
    \begin{equation}\label{simplified system at q}
        \begin{cases}
            0= \partial_{11} u^1 +\partial_{22} u^1 + (p-2)a_1(a_1\partial_{11}u^1+a_2\partial_{12} u^2)
            \\
            0= \partial_{22} u^2 +\partial_{11} u^2 + (p-2)a_2(a_2\partial_{22}u^2+a_1\partial_{12} u^1)
            \\
            0= \partial_{11} u^3 +\partial_{22} u^3.
        \end{cases}
    \end{equation}
    The terms appearing in \eqref{eq:mixedKCSwithgamma} (evaluated at the point $q$) can be written as 
    \begin{equation}
        \begin{split}
            |\nabla u|^2(q)&=1=a_1^2+a_2^2,
            \\
            |\nabla|\nabla u||^2(q)&= (a_1 \partial_{11}u^1+ a_2 \partial_{12}u^2)^2+(a_2 \partial_{22}u^2+ a_1 \partial_{12}u^1)^2,
            \\
            \abs{\Delta_\infty u}^2(q)&= a_1^2(a_1 \partial_{11}u^1+ a_2 \partial_{12}u^2)^2+a_2^2(a_2 \partial_{22}u^2+ a_1 \partial_{12}u^1)^2,
            \\
            |\nabla^2 u|^2(q)&=\left( (\partial_{11}u^1)^2+2(\partial_{12}u^2)^2+(\partial_{22}u^1)^2 \right)
            \\
            &\quad +\left( (\partial_{22}u^2)^2+2(\partial_{12}u^1)^2+(\partial_{11}u^2)^2 \right)
            \\
            &\quad +\left( (\partial_{11}u^3)^2+2(\partial_{12}u^3)^2+(\partial_{22}u^3)^2 \right).
        \end{split}
    \end{equation}
By exploiting the first two equations of the system \eqref{simplified system at q}, it suffices to show that
\begin{equation}\label{x,y reduction}
\begin{split}
&3(\theta_1 x + \theta_2 y)^2 + (p-2) \theta_1^2 (\theta_1 x + \theta_2 y)^2 \\
&\le C \Big( x^2 + 2y^2 + (x + (p-2) \theta_1 (\theta_1 x + \theta_2 y))^2 
\\
&\quad + (p-2+2\gamma)(\theta_1 x + \theta_2 y)^2 +2\gamma(p-2)\theta_1^2(\theta_1x+\theta_2y)^2\Big)
\end{split}
\end{equation}
for all $x,y\in\R$ and $\theta_1,\theta_2\ge0$ satisfying $\theta_1^2+\theta_2^2 = 1$.
Indeed, by first taking $x=\partial_{11}u^1,y=\partial_{12}u^2,\theta_1=a_1,\theta_2=a_2$, we see from \eqref{simplified system at q} that
\[
-\partial_{22}u^1=x+(p-2)\theta_1(\theta_1x+\theta_2 y).
\]
Applying \eqref{x,y reduction} yields part of \eqref{eq:mixedKCSwithgamma}. The other part follows by taking
$x=\partial_{22}u^2,y=\partial_{12}u^1,\theta_1=a_2,\theta_2=a_1$. In this way, we obtain a slightly stronger inequality, without the terms in $ |\nabla^2 u|^2(q)$
involving $u^3$. This is not a problem, since the full Hessian $|\nabla^2 u|^2$ appears only on the right hand side of \eqref{eq:mixedKCSwithgamma}.

We introduce the rotated coordinates $z = \theta_1 x + \theta_2 y$, $w = \theta_2 x - \theta_1 y$. Notice that $x^2+y^2 = z^2+w^2$, $x = \theta_1 z + \theta_2 w$, and thus we can rearrange the above to the form
\[
\begin{split}
&3z^2 + (p-2) \theta_1^2 z^2\\
&\le C \left( (2+\theta_1^2p(p-2))z^2 +2w^2+2(p-2)\theta_1\theta_2zw+(p-2+2\gamma)z^2+2\gamma(p-2)\theta_1^2z^2\right),
\end{split}
\]
which can be further simplified to 
\[
Az^2+2Bzw+(2C)w^2\ge 0,
\]
where
\[
A\coloneqq C(p+2\gamma)(1+\theta_1^2(p-2))-3-\theta_1^2(p-2)
\]
and
\[
B\coloneqq C(p-2)\theta_1\theta_2.
\]
Thus \eqref{eq:mixedKCSwithgamma} holds if $C>0$ is such that the quadratic form
\[
(z,w)\mapsto Az^2+2Bzw+(2C)w^2
\]
is positive definite. Since $2C>0$, this form is positive definite if and only if its determinant is nonnegative, i.e., 
\begin{align*}
    0&\leq A\cdot (2C)-B^2
    \\
    &=C^2\left(2(p+2\gamma)(1+\theta_1^2(p-2))-
    \theta_1^2\theta_2^2(p-2)^2\right)
    \\
    &\quad -C\left(2(3+\theta_1^2(p-2))\right).
\end{align*}
Note that for $1.3<p<2$ and $-\frac{1}{2}<\gamma\leq0$ we have
\begin{align*}
  2(p+2\gamma)(1+\theta_1^2(p-2))-
    \theta_1^2\theta_2^2(p-2)^2&\geq 2(p+2\gamma)(1+(p-2))-\frac{1}{4}(p-2)^2
    \\
    &\geq 2(1.3-1)(1.3-1)-\frac{1}{4}(1.3-2)^2
    \\
    &=0.0575
    \\
    &>0.
\end{align*}
This is the reason we assume $p>1.3$.
Similarly, for $2\leq p<3$ and $-1<\gamma\leq 0$, we have
\begin{align*}
  2(p+2\gamma)(1+\theta_1^2(p-2))-
    \theta_1^2\theta_2^2(p-2)^2&\geq 2(p-2)-\frac{1}{4}(p-2)^2
    \\
    &=\frac{1}{4}(p-2)(10-p)
    \\
    &>0.
\end{align*}
Therefore the condition on $C$ is
\[
C\geq \frac{2(3+\theta_1^2(p-2))}{2(p+2\gamma)(1+\theta_1^2(p-2))-
    \theta_1^2\theta_2^2(p-2)^2}
\]
for all $\theta_1, \theta_2\geq0$
with $\theta_1^2+\theta_2^2=1$.
In the case $1.3<p<2$, this is equivalent to 
\[
C\geq \frac{p+1}{(p-1)(p+2\gamma)},
\]
while for $2\leq p<3$ we have 
\[
C\geq \frac{3}{p+2\gamma}.
\] 
\end{proof}

\section{Bochner Inequality}

Now that we have established Lemma \ref{le:mixedKCSwithgamma}, we can prove the inequality \eqref{boch}.

\begin{theorem}[Improved Bochner Inequality]\label{thm:Bochnerwithgamma}
    Let $u \colon \S^2\to \S^3$ be a $C^1$ minimizing $p$-harmonic tangent map for some $1.3 < p < 3$, and let $\gamma\in (-\frac{1}{2},0]$ if $p\in(1.3,2)$ and
$\gamma\in (-1,0]$ if $p\in[2,3)$.
Then the following inequality holds:
 \begin{equation}\label{eq:Bochnerwithgamma}
 \begin{split}
   \int_{\S^2} |\nabla u|^{p-2+2\gamma}&\bigg\{|\nabla^2 u|^2+(p-2+2\gamma)|\nabla |\nabla u||^2+2\gamma(p-2)\abs{\Delta_\infty u}^2 \bigg\}  \\
  &\le \frac12 \int_{\S^2} |\nabla u|^{p+2+2\gamma} - \int_{\S^2}|\nabla u|^{p+2\gamma},
 \end{split}
\end{equation}
where we adopt the convention that negative powers of $|\nabla u|$ are zero if $|\nabla u|=0$.
\end{theorem}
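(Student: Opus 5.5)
We will derive \eqref{eq:Bochnerwithgamma} from a pointwise Bochner--Weitzenb\"ock identity for $|\nabla u|^2$ on the set $\{|\nabla u|>0\}$. There $u$ is smooth by Lemma \ref{smooth u}. The identity is tested against a regularized power of $|\nabla u|$, and the $p$-harmonic equation is used to eliminate the Laplacian terms.

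\textbf{Pointwise identity.} For a map $u\colon\S^2\to\S^3$ we have
\[
\tfrac12\Delta|\nabla u|^2=|\nabla^2u|^2+\langle\nabla u,\nabla\Delta u\rangle+\mathrm{Ric}_{\S^2}(\nabla u,\nabla u)-\mathrm{Rm}_{\S^3}\text{-terms}.
\]
On the unit spheres these curvature terms give $|\nabla u|^2-\big(|\nabla u|^4-|u^*g|^2\big)$. Since $u^*g$ has rank at most $2$, $|\nabla u|^4-|u^*g|^2=2a_1^2a_2^2\le\frac12|\nabla u|^4$. This is where the constants $\frac12$ and $-1$ on the right-hand side come from. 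The $p$-harmonic map equation in the form $\div(|\nabla u|^{p-2}\nabla u)\perp T_u\S^3$ lets us rewrite the tension field. Using the notation of the proof of Lemma \ref{le:mixedKCSwithgamma}, we get $\Delta u=-(p-2)|\nabla u|^{-1}\nabla u\cdot\nabla|\nabla u|+(\text{normal part})$. Hence $\langle\nabla u,\nabla\Delta u\rangle$ becomes a divergence term plus terms involving $|\nabla|\nabla u||^2$ and $\abs{\Delta_\infty u}^2$.

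\textbf{Testing and integration by parts.} Multiply by $\phi_\delta=(|\nabla u|^2+\delta)^{(p-2+2\gamma)/2}$, or use a cutoff $\eta_\eps(|\nabla u|)$ vanishing near $\{\nabla u=0\}$. Then integrate over $\S^2$, which has no boundary, and integrate by parts both $\Delta|\nabla u|^2$ and the divergence term. Differentiating the weight $|\nabla u|^{p-2+2\gamma}$ produces the coefficient $(p-2+2\gamma)$ in front of $|\nabla|\nabla u||^2$. Differentiating the $(p-2)$-term from the equation produces the cross term $2\gamma(p-2)\abs{\Delta_\infty u}^2$. The algebra should reproduce exactly the left side of \eqref{eq:Bochnerwithgamma}, up to errors that vanish as $\delta\to0$. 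Throughout we use $u\in W^{2,2}$ and the local smoothness from Lemma \ref{second derivatives} and Lemma \ref{smooth u}.

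\textbf{Main obstacle: the limit $\delta\to0$ (or $\eps\to0$).} The exponent $p-2+2\gamma$ may be negative, so the weight blows up on $\{\nabla u=0\}$. We handle this as follows.
\begin{enumerate}[label=(\roman*)]
\item By Lemma \ref{le:mixedKCSwithgamma}, with $p+2\gamma>0$ from the admissible range of $\gamma$, the integrand on the left is pointwise nonnegative, since it dominates a positive multiple of $3|\nabla|\nabla u||^2+(p-2)\abs{\Delta_\infty u}^2\ge0$. Fatou's lemma therefore applies to the left side.
\item The right side converges by dominated convergence, because $p+2\gamma>0$.
\item The error terms from differentiating the cutoff are of the form $\int\eta_\eps'(|\nabla u|)\,|\nabla u|^{p-1+2\gamma}|\nabla|\nabla u||\cdots$. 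We will show they have the right sign or vanish. The first attempt is to choose the cutoff so that these errors carry a favourable sign, i.e.\ they appear with the same structure as the $(p-2+2\gamma)$-term. If that fails, we estimate them by Cauchy--Schwarz against the already-controlled weighted Hessian integral with $\delta$-regularized weight, using $|\nabla^2u|\in L^2$ and $|\nabla u|^{p-2}|\nabla^2u|^2\in L^1$ from Lemma \ref{second derivatives}.
\end{enumerate}
Making the cancellation of these boundary-layer terms precise is the delicate step.
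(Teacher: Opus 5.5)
Your architecture is the paper's: the sphere Bochner--Weitzenb\"ock inequality (your bound $2a_1^2a_2^2\le\frac12|\nabla u|^4$ is where the $\frac12$ and $-1$ come from), testing against a regularized power of $|\nabla u|$, substituting $\Delta u=(2-p)\Delta_\infty u$, then Fatou on the left and dominated convergence on the right using $p+2\gamma>0$. However, the step you defer as ``delicate'' is the step that carries the proof, and what you say about it does not work as stated.

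First, the regularization errors are not known a priori to vanish as $\delta\to0$. Second, your fallback is circular. For $\gamma<0$, finiteness of $\int|\nabla u|^{p-2+2\gamma}|\nabla^2u|^2$ is part of what the theorem asserts. Lemma~\ref{second derivatives} only gives the weight $|\nabla u|^{p-2}$, so Cauchy--Schwarz against it controls nothing near $\{\nabla u=0\}$. Third, in (i), nonnegativity of the \emph{limiting} integrand is not what Fatou needs. With your weight $\phi_\delta$ you must know that the regularized integrands themselves are nonnegative, and that again depends on the error terms.

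The missing observation is a sign. Every term produced by differentiating the weight (or the cutoff) carries the factor $|\nabla|\nabla u||^2+(p-2)\abs{\Delta_\infty u}^2$. This factor is nonnegative because $\abs{\Delta_\infty u}\le|\nabla|\nabla u||$ and $p>1$. Concretely, take $s=|\nabla u|$, $k=p-2+2\gamma$, and a nondecreasing cutoff $\eta_\eps$ vanishing on $[0,\eps]$. The test function $\eta_\eps(s)s^k$ gives
\[
\begin{aligned}
&\int_{\S^2}\eta_\eps(s)\,s^{k}\big\{|\nabla^2u|^2+k|\nabla s|^2+2\gamma(p-2)\abs{\Delta_\infty u}^2\big\}\\
&\quad+\int_{\S^2}\eta_\eps'(s)\,s^{k+1}\big\{|\nabla s|^2+(p-2)\abs{\Delta_\infty u}^2\big\}
\le\int_{\S^2}\eta_\eps(s)\,s^{k}\big(\tfrac12 s^4-s^2\big).
\end{aligned}
\]
So the boundary-layer term has a favourable sign and can simply be dropped. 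Monotone convergence then finishes, since the first integrand is $\eta_\eps$ times a function that is nonnegative by Lemma~\ref{le:mixedKCSwithgamma}. This variant even avoids the $W^{2,2}$ approximation, because the test function is supported where $u$ is smooth.

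The paper instead uses $g=|\nabla u|_\delta^{p-4+2\gamma}|\nabla u|^2$. There the regularized integrand is affine in $t=|\nabla u|^2/|\nabla u|_\delta^2\in[0,1]$ with slope $(p-4+2\gamma)\{|\nabla|\nabla u||^2+(p-2)\abs{\Delta_\infty u}^2\}\le0$. Hence it is bounded below by its value at $t=1$, which is exactly the target integrand, and Fatou applies.

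With your $\phi_\delta$ the slope is $k\{\cdots\}$, whose sign flips when $k>0$ (possible for $p\ge2$). You would then also have to check the endpoint $t=0$, which holds since $(p-2)^2<1$ and $\abs{\Delta_\infty u}\le|\nabla^2u|$. The extra factor $|\nabla u|^2$ in the paper's weight makes the exponent $p-4+2\gamma$ negative in all cases, so no case split is needed.
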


\begin{proof}
First, consider an arbitrary map $u \colon \S^2 \to \S^3$ of class $C^3$; an approximation argument will later get rid of this assumption. For such maps, we have the Bochner--Weitzenb\"ock formula (see, e.g., \cite[Lemma A.2]{GMM} for a modern proof)
\begin{equation}
\label{eq:Bochner-on-spheres}
    \frac 12 \Delta |\nabla u|^2 
    \geq |\nabla^2 u|^2 + \langle \nabla \Delta u, \nabla u \rangle + |\nabla u|^2 - \frac 12 |\nabla u|^4.
\end{equation}
Fixing some $\delta > 0$, we introduce the quantity 
\[
g\coloneqq|\nabla u|_\delta^{p-4+2\gamma}|\nabla u|^2,
\quad \text{where} \quad
|\nabla u|_\delta\coloneqq(|\nabla u|^2+\delta^2)^{\frac{1}{2}},
\]
and note that $g$ is well-defined and of class $C^2$ thanks to this regularization procedure. Multiply our inequality by $g$ and integrating by parts, we obtain
\begin{equation}\label{eq:Bochneraftertesting}
    \begin{split}
    -\frac{1}{2}\int_{\S^2}\langle \nabla|\nabla u|^2,\nabla g\rangle \geq &\int_{\S^2}g\left(|\nabla^2 u|^2+|\nabla u|^2-\frac{1}{2}|\nabla u|^4\right)
    \\
    &-\int_{\S^2}\langle \Delta u, \divg\{ g\nabla u\}\rangle.
    \end{split}
\end{equation}
Note that this inequality does not involve any third order derivatives of $u$, and second order derivatives are squared. Thus, the inequality can be obtained for any $u \in C^1 \cap W^{2,2}$ by approximation. In consequence of \Cref{second derivatives}, from now on we will work with a $p$-harmonic map $u$ as in the statement of the theorem.
Since
\begin{equation*}
\begin{split}
\nabla g & =  (p-4+2\gamma) \ |\nabla u|_{\delta}^{p-6+2\gamma} \ |\nabla u|^3 \ \nabla|\nabla u| \\
& \pheq + 2 \ |\nabla u|_\delta^{p-4+2\gamma} \ |\nabla u|  \ \nabla |\nabla u|,
\end{split}
\end{equation*}
the term on the left-hand side of \eqref{eq:Bochneraftertesting} can be written as 
\begin{equation}\label{BochnerLHS}
    \begin{split}
        -\frac{1}{2}\int_{\S^2}\langle \nabla|\nabla u|^2,\nabla g\rangle&=-\frac{1}{2}\int_{\S^2}\langle 2|\nabla u|\ \nabla|\nabla u|,\nabla g\rangle
        \\
        &=-\int_{\S^2}|\nabla u|_\delta^{p-4+2\gamma}|\nabla u|^2 \bigg\{(p-4+2\gamma)\frac{|\nabla u|^2}{|\nabla u|_\delta^2}+2\bigg\}|\nabla|\nabla u||^2.
    \end{split}
\end{equation}
For the right-hand side of \eqref{eq:Bochneraftertesting}, we expand $\divg\{ g\nabla u\}$ to obtain 
\begin{equation}
    \begin{split}
        &-\int_{\S^2}\langle \Delta u, \divg\{ g\nabla u\}\rangle
        \\[2ex]
        &=-\int_{\S^2}\left\langle \Delta u, g\Delta u+\bigg( (p-4+2\gamma)|\nabla u|_\delta^{p-6+2\gamma}|\nabla u|^4+2|\nabla u|_\delta^{p-4+2\gamma}|\nabla u|^2\bigg) \Delta_\infty u\right\rangle.
    \end{split}
\end{equation}

Now we can transform this last term by exploiting the (intrinsic) $p$-harmonic map equation
\begin{equation}\label{pequation}
-\divg(|\nabla u|^{p-2} \nabla u)=0.
\end{equation}
On the set $\{\nabla u\neq 0\}$, we can write \eqref{pequation} in non-divergence form as 
\begin{equation}\label{perpeq}
\Delta u+(p-2)\Delta_\infty u=0,
\end{equation}
where the $\infty$-Laplacian $\Delta_\infty u$ was defined in \eqref{eq:infinity-laplacian}.
Substituting $g=|\nabla u|_\delta^{p-4+2\gamma}|\nabla u|^2$ and $\Delta u = (2-p) \Delta_\infty u$ yields
\begin{equation}\label{BochnerRHS}
    \begin{split}
        &-\int_{\S^2}\langle \Delta u, \divg\{ g\nabla u\}\rangle
        \\[2ex]
        &=\int_{\S^2}(p-2)\bigg\{4-p+(p-4+2\gamma)\frac{|\nabla u|^2}{|\nabla u|_{\delta}^2}\bigg\}\abs{\nabla u}_\delta^{p-4+2\gamma}|\nabla u|^2\abs{\Delta_\infty u}^2,
    \end{split}
\end{equation}
Combining \eqref{eq:Bochneraftertesting}, \eqref{BochnerLHS}, \eqref{BochnerRHS}, and rearranging the terms yields 
\begin{equation}\label{DeltaBochner}
    \begin{split}
        \int_{\S^2}|\nabla u|^2&|\nabla u|_\delta^{p-4+2\gamma}\bigg(|\nabla^2 u|^2+\bigg\{(p-4+2\gamma)\frac{|\nabla u|^2}{|\nabla u|_\delta^2}+2\bigg\}|\nabla|\nabla u||^2
        \\[2ex]
        &+(p-2)\bigg\{4-p+(p-4+2\gamma)\frac{|\nabla u|^2}{|\nabla u|_{\delta}^2}\bigg\}|\Delta_\infty u|^2\bigg)
        \\[2ex]
        &\le \frac12 \int_{\S^2} |\nabla u|^6|\nabla u|_\delta^{p-4+2\gamma} - \int_{\S^2}|\nabla u|^{4}|\nabla u|_\delta^{p-4+2\gamma}.
    \end{split}
\end{equation}
Since $p+2\gamma>0$, the right-hand side of \eqref{DeltaBochner} converges to the right-hand side of \eqref{eq:Bochnerwithgamma} as $\delta\to 0$.
For the left-hand side, let $t=\frac{|\nabla u|^2}{|\nabla u|_\delta^2}\in[0,1]$. Then
\begin{equation*}
\begin{split}
    &|\nabla^2 u|^2+\bigg\{(p-4+2\gamma)\frac{|\nabla u|^2}{|\nabla u|_\delta^2}+2\bigg\}|\nabla|\nabla u||^2+(p-2)\bigg\{4-p+(p-4+2\gamma)\frac{|\nabla u|^2}{|\nabla u|_{\delta}^2}\bigg\}|\Delta_\infty u|^2
    \\[3ex]
    &=|\nabla^2 u|^2+\left((p-4+2\gamma)t+2\right)|\nabla|\nabla u||^2+(p-2)\left(4-p+(p-4+2\gamma)t\right)|\Delta_\infty u|^2
    \\[3ex]
    &=|\nabla^2 u|^2+2|\nabla|\nabla u||^2+(p-2)(4-p)|\Delta_\infty u|^2+t(p-4+2\gamma)(|\nabla|\nabla u||^2+(p-2)|\Delta_\infty u|^2)
\end{split}
\end{equation*}
Since $1.3<p<3$, $\gamma\leq 0$, and $|\Delta_\infty u|\leq |\nabla|\nabla u||$, the term involving $t$ is always nonpositive,
\[
t(p-4+2\gamma)(|\nabla|\nabla u||^2+(p-2)|\Delta_\infty u|^2)\leq 0.
\]
Therefore we can estimate it from below by taking $t=1$, which implies that
\begin{align*}
        &|\nabla^2 u|^2+\bigg\{(p-4+2\gamma)\frac{|\nabla u|^2}{|\nabla u|_\delta^2}+2\bigg\}|\nabla|\nabla u||^2+(p-2)\bigg\{4-p+(p-4+2\gamma)\frac{|\nabla u|^2}{|\nabla u|_{\delta}^2}\bigg\}|\Delta_\infty u|^2
        \\
        &\geq |\nabla^2 u|^2+\left( p-2+2\gamma\right) |\nabla|\nabla u||^2+2\gamma (p-2)|\Delta_\infty u|^2.
\end{align*}
By Lemma \ref{le:mixedKCSwithgamma} (Mixed Kato--Cauchy-Schwarz Inequality), we can conclude that the integrand in 
\begin{equation*}
    \begin{split}
        \int_{\S^2}|\nabla u|^2&|\nabla u|_\delta^{p-4+2\gamma}\bigg(|\nabla^2 u|^2+\bigg\{(p-4+2\gamma)\frac{|\nabla u|^2}{|\nabla u|_\delta^2}+2\bigg\}|\nabla|\nabla u||^2
        \\
        &+(p-2)\bigg\{4-p+(p-4+2\gamma)\frac{|\nabla u|^2}{|\nabla u|_{\delta}^2}\bigg\}|\Delta_\infty u|^2\bigg)
    \end{split}
\end{equation*}
is nonnegative. Therefore, the claim \eqref{eq:Bochnerwithgamma} follows by taking the limit $\delta\to 0$ in \eqref{DeltaBochner} and using Fatou's lemma.
\end{proof}

\section{Stability Inequality}

Finally, we turn to the proof of \eqref{stab}.
Note that the argument relies on Theorem \ref{thm:Bochnerwithgamma} and Lemma \ref{le:mixedKCSwithgamma}.

\begin{theorem}[Improved Stability Inequality]\label{thm:stabilitywithgamma}
Let $u \colon \S^2\to \S^3$ be a $C^1$ minimizing $p$-harmonic tangent map for some $1.3<p<3$, and let $\gamma\in (-\frac{1}{2},0]$ if $p\in(1.3,2)$ and
$\gamma\in (-1,0]$ if $p\in[2,3)$.
Then the following inequality holds:
\begin{equation}\label{eq:stabilitywithgamma}
\begin{split}
 (1+\gamma)^2&\int_{\S^2} |\nabla u|^{p-2+2\gamma}\big\{ 3|\nabla|\nabla u||^2+(p-2)\abs{\Delta_\infty u}^2\big\}\\
 & \quad\ge (3-p)\int_{\S^2} |\nabla u|^{p+2+2\gamma}-\frac 34 (3-p)^2\int_{\S^2}|\nabla u|^{p+2\gamma},
\end{split}
\end{equation}
where we adopt the convention that negative powers of $|\nabla u|$ are zero if $|\nabla u|=0$.
\end{theorem}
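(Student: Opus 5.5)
We will obtain \eqref{eq:stabilitywithgamma} from the second variation of the $p$-energy of the $0$-homogeneous extension of $u$. We follow the Schoen--Uhlenbeck construction for maps into spheres, as adapted in \cite{MM,GMM}. Fix a constant vector $v\in\R^4$ and a radial cutoff $\eta(|x|)$ on $\R^3$. The variation is
\[
u_t=\frac{u+t\varphi\,(v-\langle v,u\rangle u)}{|u+t\varphi\,(v-\langle v,u\rangle u)|},
\]
and the test function is built from a power of the gradient,
\[
\varphi(x)=\eta(|x|)\,|x|^{\beta}\,|\nabla u|_\delta^{\gamma}(x/|x|)\cdot\brac{\text{further }\delta\text{-factor}},
\]
with the same regularisation $|\nabla u|_\delta=(|\nabla u|^2+\delta^2)^{1/2}$ as in Theorem~\ref{thm:Bochnerwithgamma}.

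\textbf{Step 1: reduce to an inequality on $\S^2$.} Minimality gives $\frac{d^2}{dt^2}E_p(u_t)|_{t=0}\ge 0$. We sum over an orthonormal basis of $v\in\R^4$. The projections $v-\langle v,u\rangle u$ then contribute a trace equal to $(3-1)=2$ tangential directions. The curvature terms produce the standard Schoen--Uhlenbeck expression: roughly $\int|\nabla u|^{p-2}|\nabla\varphi|^2$ plus $(p-2)$-terms, controlling $(3-p)\int|\nabla u|^{p}\varphi^2$. Here the factor $(3-p)$ comes from $k-p$ with $k=3$ the target dimension minus correction terms.

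Writing $x=r\omega$, we separate variables. The radial integral $\int_0^\infty r^{2-p+2\beta}\,\eta\ldots$ is optimised by the usual log-cutoff choice of $\eta$ and $\beta$. This radial part yields the constant $\tfrac34(3-p)^2$: it is $\brac{\frac{n-p}{2}}^2$ with $n=3$, times $3$, arising from the trace of the $p$-Laplacian structure. The spherical part contains $|\nabla(|\nabla u|^{1+\gamma})|^2=(1+\gamma)^2|\nabla u|^{2\gamma}|\nabla|\nabla u||^2$. The anisotropic $(p-2)$ term of the second variation of $|\nabla u|^p$ produces $(p-2)|\nabla u|^{p-4}\langle\nabla u,\nabla\varphi\rangle^2$, which becomes the $(p-2)\abs{\Delta_\infty u}^2$ contribution. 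The factor $3$ in front of $|\nabla|\nabla u||^2$ comes from summing the four directions $v$ against the rank of $\nabla u$.

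\textbf{Step 2: regularise and pass to the limit.} All of the above is done with $|\nabla u|_\delta$ in place of $|\nabla u|$, since the exponent $p-2+2\gamma$ may be negative. We justify the second variation using $u\in C^{1}\cap W^{2,2}$ from Lemma~\ref{second derivatives}, and smoothness on $\{\nabla u\ne0\}$ from Lemma~\ref{smooth u}. We then let $\delta\to0$. The right-hand side converges by dominated convergence, because $p+2\gamma>0$. For the left-hand side we need a uniform bound, not just Fatou (the inequality direction is the wrong one for Fatou). We plan to use Theorem~\ref{thm:Bochnerwithgamma} together with Lemma~\ref{le:mixedKCSwithgamma}. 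Together they give
\[
\int|\nabla u|^{p-2+2\gamma}\big(3|\nabla|\nabla u||^2+(p-2)\abs{\Delta_\infty u}^2\big)<\infty,
\]
and the same argument with $\delta$-weights gives domination. Dominated convergence on $\{\nabla u\neq0\}$ then finishes the limit, and the set $\{\nabla u=0\}$ contributes nothing: $\nabla|\nabla u|=0$ a.e.\ there, matching the convention.

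\textbf{Main obstacle.} The hard part is the $\delta\to0$ limit on the left-hand side. The $\delta$-regularised integrand has extra cross terms of the form $\frac{|\nabla u|^2}{|\nabla u|_\delta^2}$, as in \eqref{DeltaBochner}. We must show these are bounded by a fixed multiple of
\[
|\nabla u|^{p-2+2\gamma}\big(|\nabla^2u|^2+(p-2+2\gamma)|\nabla|\nabla u||^2+2\gamma(p-2)\abs{\Delta_\infty u}^2\big),
\]
which is integrable by \eqref{eq:Bochnerwithgamma} and pointwise controls the integrand via \eqref{eq:mixedKCSwithgamma}. This needs the admissible range of $\gamma$, so that $C_p<\infty$ and $p+2\gamma>0$. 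We would first attempt the pointwise bound by the same $t\in[0,1]$ convexity trick used in the proof of Theorem~\ref{thm:Bochnerwithgamma}.
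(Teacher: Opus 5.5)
Your architecture is the paper's. You take the second variation of $E_p$ along a normalized perturbation of the $0$-homogeneous extension, with $\varphi=f(r)g(\theta)$ and $g=|\nabla u|_\delta^{\gamma}|\nabla u|$, and sum over an orthonormal basis of $\R^4$. Schoen--Uhlenbeck optimization of the radial profile then gives the $\delta$-regularized inequality. Finally you let $\delta\to0$: dominated convergence on the right (using $p+2\gamma>0$), and on the left a dominating function that is integrable by Lemma~\ref{le:mixedKCSwithgamma} and Theorem~\ref{thm:Bochnerwithgamma}. Projecting $v$ onto $T_u\S^3$ instead of using $\alpha$ directly is harmless, since both variations have the same first-order term and the same normal second-order term.

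\textbf{The gap} is in Step~1 for $1.3<p<2$, which is exactly the part the paper's proof is devoted to.
\begin{itemize}
\item The second $t$-derivative of $|\nabla u_t|^p$ contains $|\nabla u|^{p-2}|\nabla\varphi|^2$ with $p-2<0$.
\item For your test function, $\nabla g=|\nabla u|_\delta^{\gamma}\bigl(1+\gamma|\nabla u|^2/|\nabla u|_\delta^2\bigr)\nabla|\nabla u|$. So near $\{\nabla u=0\}$ you have $|\nabla g|\approx\delta^{\gamma}|\nabla|\nabla u||$, while $|\nabla u|^{p-2}\to\infty$.
\item To make $\frac{d^2}{dt^2}E_p(u_t)|_{t=0}$ finite and to differentiate twice under the integral, you need $|\nabla u|^{p-2}|\nabla|\nabla u||^2\in L^1$ near the zero set of $\nabla u$.
\item Neither $u\in C^1\cap W^{2,2}$ nor smoothness on the open set $\{\nabla u\neq0\}$ provides this, so the justification you state would fail.
\end{itemize}

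The paper supplies the missing bound through the weighted estimate $|\nabla u|^{p-2}|\nabla^2u|^2\in L^1_{\mathrm{loc}}$ of Lemma~\ref{second derivatives}\ref{it:Gasteletal}, combined with $|\nabla g|\lesssim_\delta|\nabla^2u|$. Alternatively, your own Step~2 observation would do the job if you move it to Step~1:
\begin{itemize}
\item \eqref{eq:mixedKCSwithgamma} and \eqref{eq:Bochnerwithgamma} give $\int_{\S^2}|\nabla u|^{p-2+2\gamma}|\nabla|\nabla u||^2<\infty$, because $3|\nabla|\nabla u||^2+(p-2)\abs{\Delta_\infty u}^2\ge(p+1)|\nabla|\nabla u||^2$ for $p<2$.
\item Since $\gamma\le0$ and $|\nabla u|$ is bounded, this controls $\int_{\S^2}|\nabla u|^{p-2}|\nabla|\nabla u||^2$.
\end{itemize}

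Conversely, the step you call the main obstacle is routine. In the stability integrand the only $\delta$-dependence is the scalar factor $|\nabla u|_\delta^{2\gamma}(1+\gamma t)^2$, with $t=|\nabla u|^2/|\nabla u|_\delta^2\in[0,1]$. It multiplies the nonnegative quantity $3|\nabla|\nabla u||^2+(p-2)\abs{\Delta_\infty u}^2$, and there are no Hessian cross terms of the kind in \eqref{DeltaBochner}. Since $(1+\gamma t)^2\le1$ and $|\nabla u|_\delta^{2\gamma}\le|\nabla u|^{2\gamma}$, domination by the $\delta$-free integrand is immediate.

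Also fix the bookkeeping. $\sum_v|v-\langle v,u\rangle u|^2=4-1=3$, not $2$. This $3$, and not a rank or a ``trace of the $p$-Laplacian,'' is the coefficient of $|\nabla u|^{p-2}|\nabla\varphi|^2$. It is therefore the coefficient of $|\nabla|\nabla u||^2$ and the factor $3$ in $\tfrac34(3-p)^2=3\cdot\tfrac{(3-p)^2}{4}$. The coefficient $-(3-p)$ of $|\nabla u|^p\varphi^2$ arises as $2+(p-2)-3$.
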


\begin{proof}
The cases $p\geq 2$ and $p<2$ are almost identical, except that the latter presents some additional technical difficulties.
The case $p\geq 2$ was treated in \cite[Lemma 4.3]{GMM}, although with a slightly smaller range of $\gamma$.
However, as noted in the discussion after inequality \eqref{deltastability1} below, we only need parts of \cite[Lemma 4.3]{GMM} and
\cite[Lemma 4.4(a)]{GMM} where $\gamma$
plays no role. For the first part of this proof we focus on the case $1.3<p<2$.

Hence, let $1.3<p<2$ and let $u\colon\S^{2}\to\S^3$ be a minimizing $p$-harmonic tangent map that is of class $C^1$.
We use the same letter to denote the $0$-homogeneous extension of $u$ to $\R^3$.
By \Cref{second derivatives} \ref{it:Gasteletal}, $u\in W^{2,p}_\textnormal{loc}(\R^3\setminus\{0\})$.
Let $\alpha\in \R^{4}$ be a fixed unit vector and 
\[
\varphi\in W^{1,p}(\R^3)\cap L^\infty(\R^3)\quad \text{with}\quad \supp \varphi\subset \R^3\setminus\{0\}.
\]
For $t\in\R$, let
\[
u_t\coloneqq \frac{u+t \varphi \alpha}{\abs{u+t \varphi \alpha}}.
\]
We will calculate the second variation of the $p$-energy of the perturbation family $u_t$.
However, we need to be careful in the case $p<2$, so we begin with a few estimates. 

The derivative of the map $F(\xi)=\xi/\abs{\xi}$ is 
\[
DF(\xi)=\frac{1}{\abs{\xi}}\left(I-\frac{\xi\otimes \xi}{\abs{\xi}^2}\right),
\]
so we have 
\begin{align*}
    \nabla u_t= \frac{1}{\abs{u+t\varphi\alpha}}(I-u_t\otimes u_t)(\nabla u+t\alpha\otimes\nabla\varphi).
\end{align*}
For convenience, denote 
\begin{align*}
    w_t&\coloneqq u+t\varphi\alpha,
    \\
    A_t&\coloneqq \nabla u+t\alpha\otimes\nabla\varphi,
    \\
    P_t&\coloneqq I-u_t\otimes u_t,
    \\
    b_t&\coloneqq \abs{w_t}^{-1}.
\end{align*}
Then we can write 
\[
\nabla u_t= b_t P_t A_t.
\]
For almost every fixed $x\in \R^3$ and sufficiently small $t$ (so that $\abs{u+t\varphi\alpha}\neq 0$), the map $t\mapsto u_t(x)$ is smooth.
Therefore we can compute the time derivatives of $u_t$:
\begin{align*}
    \partial_t u_t&=\partial_t \frac{w_t}{\abs{w_t}}
    \\
    &= \partial_t \left(F(w_t)\right)
    \\
    &=DF(w_t)\cdot\partial_tw_t
    \\
    &=\frac{1}{\abs{u+t\varphi\alpha}}\left(I-u_t\otimes u_t\right)\cdot (\varphi \alpha)
    \\
    &= b_t P_t\cdot (\varphi \alpha)
\end{align*}
and
\begin{align*}
    \partial_t^2 u_t &=\partial_t \big[b_t P_t\cdot (\varphi \alpha) \big]
    \\
    &= (\partial_t b_t) P_t\cdot(\varphi\alpha)+b_t(\partial_t P_t)\cdot(\varphi\alpha),
\end{align*}
and the time derivatives of $\nabla u_t$:
\begin{align*}
    \partial_t \nabla u_t &= \partial_t \left( b_t P_t A_t\right)
    \\
    &= \left(\partial_t b_t\right)P_t A_t+b_t\left(\partial_t P_t\right) A_t+b_t P_t \left(\partial_t A_t\right)
\end{align*}
and
\begin{align*}
    \partial^2_t \nabla u_t &=\partial_t \bigg\{\left(\partial_t b_t\right)P_t A_t+b_t\left(\partial_t P_t\right) A_t+b_t P_t \left(\partial_t A_t\right)\bigg\}
    \\
    &=\left(\partial^2_t b_t\right)P_t A_t+b_t\left(\partial^2_t P_t\right) A_t+b_t P_t \left(\partial^2_t A_t\right)
    \\
    &\quad +2 \bigg\{\left(\partial_t b_t\right)(\partial_tP_t) A_t+b_t\left(\partial_t P_t\right) (\partial_tA_t)+(\partial_tb_t) P_t \left(\partial_t A_t\right)\bigg\}.
\end{align*}
Then we compute the time derivatives of $b_t$: 
\begin{align*}
    b_t&= \abs{u+t\varphi\alpha}^{-1}
    \\
    \partial_t b_t&= \langle -\frac{u+t\varphi\alpha}{\abs{u+t\varphi\alpha}^3},\varphi\alpha\rangle=-b_t^2\varphi \langle u_t,\alpha\rangle
    \\
    \partial_t^2 b_t&=2b_t^3 \varphi^2 \langle u_t,\alpha\rangle^2-b_t^2\varphi \langle \partial_t u_t,\alpha\rangle,
\end{align*}
of $P_t$:
\begin{align*}
    P_t&= I-u_t\otimes u_t
    \\
    \partial_t P_t&= -\big[(\partial_t u_t)\otimes u_t+u_t\otimes(\partial_t u_t)\big]
    \\
    \partial_t^2 P_t&=-\big[(\partial^2_t u_t)\otimes u_t+2 \ (\partial_t u_t)\otimes (\partial_t u_t)+u_t\otimes (\partial^2_t u_t)\big],
\end{align*}
and of $A_t$:
\begin{align*}
    A_t&= \nabla u+t \alpha\otimes\nabla\varphi
    \\
    \partial_t A_t&= \alpha \otimes \nabla\varphi
    \\
    \partial_t^2 A_t&=0.
\end{align*}
From the above we can derive the estimates 
\begin{align}
    \abs{\nabla u_t}&\lesssim \abs{\nabla \varphi}+\abs{\nabla u},\label{estimate nabla u_t}
    \\
    \abs{\partial_t \nabla u_t}&\lesssim \abs{\varphi}\abs{\nabla\varphi}+\abs{\varphi}\abs{\nabla u}+\abs{\nabla \varphi},\label{estimate d/dt nabla u_t}
    \\
    \abs{\partial_t^2 \nabla u_t}&\lesssim\abs{\varphi}^2\abs{\nabla\varphi}+\abs{\varphi}^2\abs{\nabla u}+\abs{\varphi}\abs{\nabla \varphi}.\label{estimate d^2/dt^2 nabla u_t}
\end{align}
In particular, on the set $\{\nabla u_t\neq 0\}$ we have
\begin{align*}
    \partial_t \abs{\nabla u_t}^p &= \partial_t(\langle \nabla u_t,\nabla u_t\rangle)^{\frac{p}{2}}
    \\
    &=\frac{p}{2} (\langle \nabla u_t,\nabla u_t\rangle)^{\frac{p-2}{2}}\cdot 2 \langle \nabla u_t,\partial_t\nabla u_t\rangle
    \\
    &=p \abs{\nabla u_t}^{p-2} \langle \nabla u_t,\partial_t\nabla u_t\rangle,
\end{align*}
which can be estimated by
\begin{align*}
    \abs{ \partial_t \abs{\nabla u_t}^p}&\lesssim \abs{\nabla u_t}^{p-1}\abs{\partial_t\nabla u_t}
    \\
    &\lesssim \abs{\nabla\varphi}\abs{\nabla u}^{p-1}+\abs{\varphi}\abs{\nabla u}^p+\abs{\varphi}\abs{\nabla\varphi}{\abs{\nabla u}}^{p-1}
    \\
    &\quad+\abs{\nabla\varphi}^p+\abs{\varphi}\abs{\nabla\varphi}^{p-1}\abs{\nabla u}+\abs{\varphi}\abs{\nabla\varphi}^p.
\end{align*}
This estimate also holds on $\{\nabla u_t=0\}$.
Since $\varphi$ is bounded and $\nabla u,\nabla\varphi$ are in $L^p$, $\abs{ \partial_t \abs{\nabla u_t}^p}$ is estimated by an integrable function, uniformly for small $|t|$.
By the dominated convergence theorem, the function 
\[
t\mapsto \int_{\R^3} \abs{\nabla u_t}^p
\]
is differentiable on a neighborhood of $0\in \R$ and we have
\[
\frac{d}{dt}\int_{\R^3} \abs{\nabla u_t}^p =\int_{\R^3} \partial_t\abs{\nabla u_t}^p.
\]
Similarly we compute (for $\nabla u_t\neq 0$)
\begin{align*}
    \partial_t^2 \abs{\nabla u_t}^p &=p(p-2)\abs{\nabla u_t}^{p-4}\langle \nabla u_t,\partial_t\nabla u_t\rangle^2
    \\
    &\quad + p \abs{\nabla u_t}^{p-2}\left(\abs{\partial_t \nabla u_t}^2+\langle \nabla u_t,\partial_t^2\nabla u_t\rangle\right).
\end{align*}
Using the estimates \eqref{estimate nabla u_t},\eqref{estimate d/dt nabla u_t}, and \eqref{estimate d^2/dt^2 nabla u_t},
we obtain
\begin{align*}
    \abs{\partial^2_t \abs{\nabla u_t}^p}&\lesssim \abs{\nabla\varphi}^2\abs{\nabla u}^{p-2}+\abs{\varphi}^2\abs{\nabla\varphi}^p+\abs{\varphi}^2\abs{\nabla u}^p+\abs{\varphi}^2\abs{\nabla\varphi}\abs{\nabla u}
    \\
    &\quad + \abs{\varphi}\abs{\nabla\varphi}\abs{\nabla u}^{p-1}+\abs{\varphi}^2\abs{\nabla\varphi}^{p-1}\abs{\nabla u}+\abs{\varphi}\abs{\nabla\varphi}^p.
\end{align*}
All the terms are integrable, possibly except
\[
\abs{\nabla\varphi}^2\abs{\nabla u}^{p-2}.
\]
Since $p-2<0$, we have no information on the integrability of $|\nabla u|^{p-2}$.
Therefore, we choose a specific test function $\varphi$.
Let $\gamma\in(-\frac{1}{2},0]$ and choose $\varphi$ of the form 
\[
\varphi(r\theta)=f(r)g(\theta),  \quad(r,\theta)\in [0,\infty)\times \S^{2},
\]
where $f\in C^\infty_c(\R_+)$ and 
\[
g\coloneqq\abs{\nabla u}_\delta^\gamma \abs{\nabla u}
\]
for $\delta>0$, where $\abs{\nabla u}_{\delta}\coloneqq(\delta^2+\abs{\nabla u}^2)^{\frac{1}{2}}$. Since $u$ is $C^1$, $\varphi$ is bounded. Furthermore,
\begin{align*}
    \abs{\nabla\varphi}^2&=(f')^2 g^2+r^{-2}f^2\abs{\nabla g}^2
\end{align*}
and 
\[
\abs{\nabla g}= \left(\abs{\nabla u}^\gamma_\delta+\gamma \abs{\nabla u}_\delta^{\gamma-2}\abs{\nabla u}^2\right)\nabla\abs{\nabla u}.
\]
Therefore 
\[
\abs{\nabla \varphi}\lesssim \abs{g}+\abs{\nabla g}
\]
and 
\[
\abs{\nabla g}\lesssim\abs{\nabla^2 u},
\]
where the constants may blow-up as $\delta\to 0$.
By \Cref{second derivatives} \ref{it:Gasteletal},
\[
\nabla u \in W^{1,p}_{\textnormal{loc}}(\R^3\setminus\{0\}),
\]
and therefore our chosen function $\varphi$ is indeed in $W^{1,p}(\R^3)$.
We have 
\begin{align*}
    \abs{\nabla \varphi}^2\abs{\nabla u}^{p-2}&\lesssim \left(\abs{g}^2+\abs{\nabla g}^2\right)\abs{\nabla u}^{p-2}
    \\
    &\lesssim \left(\abs{\nabla u}^2+\abs{\nabla^2 u}^2\right)\abs{\nabla u}^{p-2}
    \\
    &=\abs{\nabla u}^p +\abs{\nabla u}^{p-2}\abs{\nabla^2 u}^2.
\end{align*}
Once again by \Cref{second derivatives} \ref{it:Gasteletal}, $\abs{\nabla u}^{p-2}\abs{\nabla^2 u}^2$ is integrable.
Thus, for this particular $\varphi$, the function 
\[
t\mapsto \int_{\R^3} \abs{\nabla u_t}^p
\]
is twice differentiable on a neighborhood of $0\in \R$, and we have 
\[
\frac{d^2}{dt^2}\bigg|_{t=0} \int_{\R^3} \abs{\nabla u_t}^p = \int_{\R^3} \partial^2_t\big|_{t=0}\abs{\nabla u_t}^p.
\]
Computing the second derivative, we obtain
\begin{equation}\label{presum alpha}
\begin{split}
        0 &\le \frac{1}{p}\frac{d^2}{dt^2}\bigg|_{t=0} \int_{\R^3} \abs{\nabla u_t}^p
        \\
        &=  \int_{\R^3} \bigg\{ |\nabla u|^{p-2} |\nabla \varphi|^2 (1 - |\langle u, \alpha \rangle|^2) \\
& \qquad+  |\nabla u|^{p-2} \varphi^2 (p |\nabla \langle u, \alpha \rangle|^2 + (|\langle u, \alpha \rangle|^2-1) |\nabla u|^2) \\
& \qquad+ (p-2)  |\nabla u|^{p-4} |\langle \nabla \langle u, \alpha \rangle, \nabla \varphi \rangle|^2\bigg\}.
\end{split}
\end{equation}
Summing \eqref{presum alpha} over $\alpha$ in a fixed orthonormal basis of $\R^4$ yields
\begin{equation*}\label{postsum alpha}
0 \le
3 \int_{\R^3}\bigg\{ |\nabla u|^{p-2} |\nabla \varphi|^2
- (3-p)  |\nabla u|^{p} \varphi^2
+ (p-2)  |\nabla u|^{p-4} |\langle \nabla u, \nabla \varphi \rangle|^2\bigg\}.
\end{equation*}
Now we can argue as in \cite[Lemma 1.3.]{SU3} (see \cite[Lemma A.1]{GMM} for a similar proof).By switching to polar coordinates,
factoring out the radial integrals, and optimizing over all $f$, we obtain 
\begin{equation}\label{eq:stabilitywithg}
\begin{split}
&3 \int_{\S^{2}} |\nabla u(\theta)|^{p-2} |\nabla g(\theta)|^2 \ d \theta
+ (p-2) \int_{\S^{2}} |\nabla u(\theta)|^{p-4} |\langle \nabla u(\theta), \nabla g(\theta) \rangle|^2 \ d \theta \\
&\quad - (3-p) \int_{\S^{2}} |\nabla u(\theta)|^p g(\theta)^2 \ d \theta
\ge -\frac{(3-p)^2}{4}
3 \int_{\S^{2}} |\nabla u(\theta)|^{p-2} g(\theta)^2 \ d \theta.
\end{split}
\end{equation}
Our particular choice of $g$ leads to 
\begin{equation}\label{deltastability1}
 \begin{split}
  &3\int_{\S^2} |\nabla u|^{p-2}|\nabla |\nabla u||^2\big\{|\nabla u|^\gamma_\delta + \gamma|\nabla u|^2|\nabla u|^{\gamma -2}_\delta\big\}^2\\
  &\quad + (p-2)\int_{\S^2} |\nabla u|^{p-4} |\langle \nabla u, \nabla |\nabla u|\rangle|^2 \big\{|\nabla u|^\gamma_\delta
 + \gamma|\nabla u|^2|\nabla u|^{\gamma -2}_\delta\big\}^2\\
  &\qquad \geq (3-p)\int_{\S^2} |\nabla u|^{p+2}|\nabla u|^{2\gamma}_\delta - \frac34(3-p)^2 \int_{\S^2} |\nabla u|^p|\nabla u|^{2\gamma}_\delta.
 \end{split}
\end{equation}
The inequality \eqref{deltastability1} holds also for $p\in[2,3)$. For the proof see \cite[Lemma 4.4(a)]{GMM}. In the statement of \cite[Lemma 4.4(a)]{GMM}
there is a slightly different range of $\gamma$, but the same proof works for $-1<\gamma\leq 0$ without any changes (it is only in the later parts of \cite{GMM} where the range of
$\gamma$ plays a role).

For the rest of this proof, we consider $p\in(1.3,3)$. Then \eqref{deltastability1} holds.
Rearranging the left-hand side of \eqref{deltastability1}, we obtain 
\begin{equation}\label{deltastability2}
 \begin{split}
  &\int_{\S^2} |\nabla u|^{p-2}|\nabla u|_\delta^{2\gamma}\left(1+\frac{|\nabla u|^2}{|\nabla u|_\delta^2}\gamma\right)^2 \bigg\{3|\nabla|\nabla u||^2+(p-2)\abs{\Delta_\infty u}^2\bigg\}
  \\
  &\qquad \geq (3-p)\int_{\S^2} |\nabla u|^{p+2}|\nabla u|^{2\gamma}_\delta-\frac34(3-p)^2 \int_{\S^2} |\nabla u|^p|\nabla u|^{2\gamma}_\delta.
 \end{split}
\end{equation}
Since $p+2\gamma>0$, the dominated convergence theorem implies that
\[
\lim_{\delta\to 0 }\int_{\S^2} |\nabla u|^{p+2}|\nabla u|^{2\gamma}_\delta=\int_{\S^2} |\nabla u|^{p+2+2\gamma}
\]
and
\[
\lim_{\delta\to 0 }\int_{\S^2} |\nabla u|^{p}|\nabla u|^{2\gamma}_\delta=\int_{\S^2} |\nabla u|^{p+2\gamma}.
\]
Thus the right-hand side of \eqref{deltastability2} converges to the right-hand side of \eqref{eq:stabilitywithgamma}.
On the other hand, by Lemma \ref{le:mixedKCSwithgamma} (Mixed Kato--Cauchy-Schwarz Inequality), 
\begin{equation*}
    \begin{split}
        &|\nabla u|^{p-2}|\nabla u|_\delta^{2\gamma}\left(1+\frac{|\nabla u|^2}{|\nabla u|_\delta^2}\gamma\right)^2 \bigg\{3|\nabla|\nabla u||^2+(p-2)\abs{\Delta_\infty u}^2\bigg\}
        \\
        &\le C_p|\nabla u|^{p-2+2\gamma}(1+|\gamma|)^2\bigg\{|\nabla^2 u|^2+(p-2+2\gamma)|\nabla |\nabla u||^2+2\gamma(p-2)\abs{\Delta_\infty u}^2 \bigg\}.
    \end{split}
\end{equation*}
By Theorem \ref{thm:Bochnerwithgamma} (Improved Bochner Inequality), the integral of the latter function is bounded from above by
\[
C_p(1+|\gamma|)^2\left( \frac12 \int_{\S^2} |\nabla u|^{p+2+2\gamma} - \int_{\S^2}|\nabla u|^{p+2\gamma}\right),
\]
and hence is finite. Thus we can use the dominated convergence theorem to to conclude the proof by letting $\delta\to 0$ in
\eqref{deltastability2} (note that pointwise convergence of the integrands is evident).
\end{proof}

\section{Regularity --- proof of the main theorem}

Let $u\colon \S^2\to \S^3$ be a minimizing $p$-harmonic tangent map. If $p\in[2,3)$, then, by Theorem \ref{hl thm},
the Hausdorff dimension of the singular set of the $0$-homogeneous extension $U$ of $u$ is at most $n-[p]-1=3-2-1=0$.
It follows that $u$ is regular, since otherwise $U$ would have a singular line, and hence its singular set would
be at least of dimension $1$. Therefore, $u$ is of class $C^1$. In the case $p\in (1,2)$, the same conclusion holds.

\begin{proposition}\label{C1}
    Let $u\colon \S^2\to\S^3$ be a minimizing $p$-harmonic tangent map, where $p>1$. Then $u$ is of class $C^1$.
\end{proposition}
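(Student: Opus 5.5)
We argue through the $0$-homogeneous extension $U(x)=u(x/|x|)$ of $u$ to $\R^3$, which by definition is a minimizing $p$-harmonic map on compact subsets of $\R^3$. For $p\ge 2$ the argument is the one just given before the statement. Theorem \ref{hl thm} gives $\ell\ge[p]\ge 2$, so $\dim_{\mathcal H}\sing U\le 3-[p]-1\le 0$. Since $U$ is $0$-homogeneous, its singular set is a cone. If $u$ had a singular point $\theta\in\S^2$, the whole ray $\{r\theta: r>0\}$ would lie in $\sing U$, and that ray has Hausdorff dimension $1$, a contradiction. Hence $U$ is regular away from the origin.

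For $1<p<2$ we have $[p]=1$, so Theorem \ref{hl thm} alone only gives $\dim\sing U\le 1$. That bound does not exclude singular rays, so we need the stronger conclusion $\ell\ge 2$. We must show that every minimizing $p$-harmonic tangent map from $\R^j$ into $\S^3$ is constant for $j=1,2$.

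For $j=1$ the claim is trivial. A $0$-homogeneous map on $\R$ takes one value on each half-line, and it lies in $W^{1,p}_{\rm loc}$ only if these two values agree.

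For $j=2$, a tangent map is $v(x)=\omega(x/|x|)$ with $\omega\colon\S^1\to\S^3$. Its energy on $B_1$ is $\int_0^1 r^{1-p}\,dr\int_{\S^1}|\omega'|^p$. This is finite for $p<2$, so no contradiction comes from the energy itself, and we must use minimality.

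\textbf{Key step.} Since $\pi_1(\S^3)=0$, we can compare $v$ with a competitor on $B_1$ that has the same boundary trace $\omega$. The competitor is the map $x\mapsto H(|x|,x/|x|)$, where $H\colon[0,1]\times\S^1\to\S^3$ is a smooth null-homotopy with $H(1,\cdot)=\omega$ and $H(r,\cdot)$ constant for $r$ near $0$.

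The cleanest choice is a scaling competitor. Write $E=\int_{\S^1}|\omega'|^p$ and let $L$ be the length of $\omega$. Keep $v$ on the annulus $\rho<|x|<1$, and inside $B_\rho$ use a null-homotopy of $\omega$ with energy $O(\rho^{2-p})$. Then compare $\frac{E\rho^{2-p}}{2-p}$, which is the energy of $v$ on $B_\rho$, with the competitor's energy on $B_\rho$. Both scale like $\rho^{2-p}$, so we need a sharper, constant-level comparison.

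The route we will actually follow is the standard monotonicity and first-variation one. Stationarity of $v$ with respect to domain variations of the form $x\mapsto x+t\eta(x)x$ yields, on $\S^1$, the identity $(2-p)\int_{\S^1}|\omega'|^p=0$. Indeed, for a $0$-homogeneous map the Pohozaev/monotonicity identity reads
\[
\frac{d}{d\rho}\Big(\rho^{p-n}\int_{B_\rho}|\nabla v|^p\Big)=p\rho^{p-n}\int_{\partial B_\rho}|\nabla v|^{p-2}|\partial_r v|^2=0 .
\]
The useful content lies instead in the domain variation that uses radial dilation with a cutoff. With $n=2$ and $p<2$, it gives $(n-p)\int_{B_1}|\nabla v|^p\phi=0$-type relations, which force $E=0$. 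We expect this computation to be the main obstacle: it requires a careful justification that minimizers are stationary under inner variations, valid here because the energy is finite.

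Once $E=0$, $\omega$ is constant, so $\ell\ge 2$. Theorem \ref{hl thm} then gives $\dim\sing U\le 0$, and the ray argument above shows $u$ is regular, that is, $C^1$, on $\S^2$.

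If the inner-variation argument turns out delicate, the fallback is the cited literature. The constancy of $p$-minimizing tangent maps from $\R^2$ for $p<2$ into simply connected targets is known, by the Hardt–Lin approach of cutting off near $0$ and filling with a null-homotopy.
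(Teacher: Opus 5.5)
Your treatment of $p\ge 2$, the reduction of the case $1<p<2$ to proving $\ell\ge 2$, and the case $j=1$ are fine and match the paper. The gap is the key step for $j=2$: inner variations cannot force $E=\int_{\S^1}|\omega'|^p=0$. For $X(x)=\eta(|x|)x$ and $0$-homogeneous $v$ one has $\partial_r v=0$, so the inner-variation identity reads
\[
\int_{\R^2}|\nabla v|^p\big((2-p)\eta+r\eta'\big)\,dx=E\int_0^\infty \big(r^{2-p}\eta(r)\big)'\,dr=0 .
\]
This holds automatically. The cutoff term $r\eta'$, which you dropped, cancels $(2-p)\eta$ exactly; this is the same fact as the vanishing derivative in the monotonicity formula you wrote down.

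More fundamentally, no first-variation argument can succeed. The map $v(x)=(x/|x|,0,0)$ is a nonconstant $0$-homogeneous map $\R^2\to\S^3$ which, for $1<p<2$, is weakly $p$-harmonic and stationary. As recalled in the introduction, it is even $p$-minimizing as a map into $\S^1$. For this map $(2-p)\int_{\S^1}|\omega'|^p=2\pi(2-p)\neq 0$.

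Your null-homotopy competitor does use minimality and the extra room in $\S^3$. But, as you noticed, scale invariance reduces it to a constant-level energy comparison that you never carry out. The fact $\pi_1(\S^3)=0$ alone gives nothing quantitative. For the same reason the ``cut off and fill'' fallback for general simply connected targets is not something you can simply cite here.

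What is missing is a second-order argument that uses the specific geometry of $\S^3$. It has two parts.

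First, reduce $\omega$ to a multiply covered great circle. By Theorem \ref{hl thm} with $\ell\ge 1$, the singular set of $v$ in $\R^2$ is discrete, hence contained in $\{0\}$. So $\omega\in C^1$ and satisfies $-(|\omega'|^{p-2}\omega')'=|\omega'|^p\omega$. Testing with $\omega'$ gives $|\omega'|^{p-2}\omega''\cdot\omega'=0$. Hence $|\omega'|\equiv k$ and $\omega''+k^2\omega=0$, i.e.\ $\omega$ is a great circle traversed $k\in\mathbb{Z}$, $k\ge 1$, times.

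Second, use minimality through the second variation. Perturb along $(v+t\varphi\alpha)/|v+t\varphi\alpha|$ and sum over an orthonormal basis $\alpha$ of $\R^4$; the paper invokes \cite[Lemma 3.4(a)]{GMM} for the resulting stability inequality. For constant $|\omega'|$ it collapses to $(3-p)k^{p+2}\le\frac{3(2-p)^2}{4}k^p$, i.e.\ $k^2\le\frac{3(2-p)^2}{4(3-p)}<1$, a contradiction.

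Equivalently, tilting the circle towards a pole of $\S^3$ with a suitable radial profile strictly lowers the energy. The factor $\frac{(2-p)^2}{4}$ is the sharp radial Hardy constant. This instability in the normal directions of $\S^3$ is exactly the information your proposal never uses.
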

\begin{proof}
    As we noted in the discussion above, the case $p\geq 2$ follows directly from Theorem \ref{hl thm}. Hence,
    suppose that $p\in(1,2)$ and let $u\colon \S^2\to \S^3$ be a minimizing $p$-harmonic tangent map.
    Denote the $0$-homogeneous extension of $u$ by $U$, i.e.,
    \[
    U(x)\coloneqq u\left(\frac{x}{|x|}\right)\quad\text{for $x\neq 0$.}
    \]
    It suffices to show that the singular set of $U$ has Hausdorff dimension $0$. Then $u$ is $C^1$, since
    otherwise singular points of $u$ lead to singular lines of $U$.

    Let $\ell$ be the largest integer such that any minimizing $p$-harmonic tangent map from the unit ball in $\R^j$
    into $\S^3$ is a constant map for each $j=1,\dots,\ell$. Then, by Theorem \ref{hl thm}, $\ell\geq [p]=1$ and
    the singular set of $U$ has Hausdorff dimension at most $3-\ell-1=2-\ell $. Thus, it suffices to show that $\ell\geq 2$,
    i.e., that every minimizing $p$-harmonic tangent map from $\R^2$ into $\S^3$ is constant.

    Suppose for the sake of contradiction that there exists a nonconstant minimizing $p$-harmonic tangent map $\psi\colon \R^2\to \S^3$.
    By Theorem \ref{hl thm}, $\psi$ is $C^1$ on $\R^2\setminus\{0\}$. By $0$-homogeneity,
    \[
    \psi(re^{i\theta})=\gamma(\theta),\quad (r,\theta)\in \R\times (0,2\pi)
    \]
    for some $C^1$ map $\gamma\colon \S^1\to \S^3$, where we identify $\S^1\cong \R/(2\pi \mathbb{Z})$. We have the
    $p$-harmonic map equation
    \[
    -\left(|\gamma'|^{p-2}\gamma'\right)'=|\gamma'|^p \gamma.
    \]
    Taking the inner product with $\gamma'$ and using $\gamma'\perp \gamma$ yields
    \[
    \left((p-2)|\gamma'|^{p-4}(\gamma''\cdot \gamma')\gamma'+|\gamma'|^{p-2}\gamma''\right)\cdot \gamma'=0,
    \]
    which can be simplified to
    \[
    |\gamma'|^{p-2} \gamma''\cdot \gamma'=0.
    \]
    Since $\psi$ is nonconstant, the open set $\{\gamma'\neq 0\}$ is nonempty. On this set, we have
    \[
    \gamma''\cdot \gamma'=0.
    \]
    This means that $|\gamma'|$ is either zero, or equal to some constant $k>0$. By continuity, $|\gamma'|=k$ on all $\S^1$.
    Substituting $|\gamma'|=k$ into the $p$-harmonic map equation yields
    \[
    \gamma''+k^2\gamma=0.
    \]
    Thus, after rotating, we can assume that $\gamma$ is of the form
    \[
    \gamma(\theta)=(\cos k\theta,\sin k\theta,0,0),\quad \theta\in[0,2\pi].
    \]
    Furthermore, $2\pi$-periodicity forces that $k\in \mathbb{Z}$. On the other hand, we have the stability inequality
    \cite[Lemma 3.4.(a)]{GMM}
    \begin{equation}\label{stability S1}
        \begin{split}
            \int_{\S^1} |\gamma'|^{p-4}&\left(3 |\gamma'|^2 ||\gamma '|'|^2 +(p-2)|\langle \gamma',|\gamma'|'\rangle|^2\right)
            \\
            &\geq (3-p)\int_{\S^1} |\gamma'|^{p+2}- \frac{3(2-p)^2}{4}\int_{\S^1} |\gamma'|^p,
        \end{split}
    \end{equation}
    which implies
    \begin{equation}
        (3-p) k^{p+2}\leq \frac{3(2-p)^2}{4} k^p.
    \end{equation}
    Thus, since $p\in(1,2)$,
    \[
    k^2\leq \frac{3(2-p)^2}{4(3-p)}\leq \frac{3}{4}\cdot \frac{1}{1}=\frac{3}{4}<1.
    \]
    However, $k\in \mathbb{Z}$ and $k>0$, which is a contradiction.
\end{proof}

Now that we know that minimizing $p$-harmonic tangent maps from $\S^2$ to $\S^3$ are of class $C^1$, we can apply Theorems \ref{thm:stabilitywithgamma},
\ref{thm:Bochnerwithgamma}, and \ref{le:mixedKCSwithgamma}. Thus, as outlined in Section \ref{outline}, we can prove Theorem \ref{regularity intro}.

\begin{proof}[Proof of Theorem \ref{regularity intro}]
By Theorem \ref{hl thm}, it suffices to show that every minimizing $p$-harmonic tangent map
$u\colon \S^2\to \S^3$ is constant for $p\in[p_0,3)$. By Theorem \ref{C1}, $u$ is $C^1$.

Let $p\in(1.3,3)$ and consider $\gamma\in(-1,0]$ if $p\in[2,3)$ or $\gamma\in(-\frac{1}{2},0]$ if $p\in(1.3,2)$ to be fixed later.
Then, by the stability inequality \eqref{eq:stabilitywithgamma},
\begin{equation*}
\begin{split}
 &3\int_{\S^2} |\nabla u|^{p-2+2\gamma}|\nabla|\nabla u||^2+(p-2)\int_{\S^2}\abs{\nabla u}^{p-2+2\gamma}\abs{\Delta_\infty u}^2\\
 & \quad\ge \frac{3-p}{(1+\gamma)^2}\int_{\S^2} |\nabla u|^{p+2+2\gamma}-\frac{3(3-p)^2}{4(1+\gamma)^2}\int_{\S^2}|\nabla u|^{p+2\gamma}.
\end{split}
\end{equation*}
Combining this with the Kato--Cauchy--Schwarz inequality \eqref{eq:mixedKCSwithgamma} and the Bochner inequality \eqref{eq:Bochnerwithgamma},
we obtain
\begin{align*}
    \frac{3-p}{(1+\gamma)^2}&\int_{\S^2}\abs{\nabla u}^{p+2+2\gamma}-\frac{3(3-p)^2}{4(1+\gamma)^2}\int_{\S^2}\abs{\nabla u}^{p+2\gamma}
    \\
    &\leq \frac{C_p}{2}\int_{\S^2}\abs{\nabla u}^{p+2+2\gamma}+C_p\int_{\S^2}\abs{\nabla u}^{p+2\gamma},
\end{align*}
where
\[
 C_p = \begin{dcases}
 \frac{p+1}{(p + 2\gamma)(p-1)}\quad&\text{if $1.3<p<2$,}
 \\
\frac{3}{p+2\gamma}&\text{if $2\leq p<3$.}
\end{dcases}
 \]
Therefore, we get 
 \begin{equation}\label{regularityIneq}
     \left(\frac{3-p}{(1+\gamma)^2}-\frac{C_p}{2}\right)\int_{\S^2}\abs{\nabla u}^{p+2+2\gamma}+\left(C_p-\frac{3(3-p)^2}{4(1+\gamma)^2}\right)\int_{\S^2}\abs{\nabla u}^{p+2\gamma}\leq 0.
 \end{equation}
Our goal is to find an admissible $\gamma$ for which both of the coefficients 
\[
A(p,\gamma)\coloneqq \frac{3-p}{(1+\gamma)^2}-\frac{C_p}{2}
\]
and 
\[
B(p,\gamma)\coloneqq C_p-\frac{3(3-p)^2}{4(1+\gamma)^2}
\]
are nonnegative, and at least one of them is positive. If we find such $\gamma$, then \eqref{regularityIneq} implies that $u$ is constant.

Suppose first that $p\in(2,3)$. After substituting $\frac{3}{p+2\gamma}$ for $C_p$ and rearranging, the conditions $A(p,\gamma)\geq 0$ and $B(p,\gamma)>0$
can be transformed into 
\begin{equation}\label{p>2:Fcondition}
    F(p,\gamma)\coloneqq 2(p+2\gamma)(3-p)-3(1+\gamma)^2\geq  0
\end{equation}
and 
\begin{equation}\label{p>2:Gcondition}
    G(p,\gamma) \coloneqq 4(1+\gamma)^2-(3-p)^2(p+2\gamma) > 0.
\end{equation}

We will show that, for every $p\in(2,3)$, there is $\gamma\in(-1,0]$ such that \eqref{p>2:Fcondition} and \eqref{p>2:Gcondition} hold. Indeed,
the parameter $\gamma \coloneqq 2-p\in (-1,0]$ is admissible and satisfies 
\begin{align*}
F(p,2-p) & = (p-1)(3-p) > 0, \\
G(p,2-p) & = p(3-p)^2 > 0,
\end{align*}
which concludes the proof. The fact that $\gamma=2-p$ works can also be read off
directly from the diagram below, which illustrates the regions $\{A(p,\gamma)>0\}$ and $\{B(p,\gamma)>0\}$ in the $(p,\gamma)$-plane.
\begin{center}
\begin{tikzpicture}[scale=5, line cap=round, line join=round]
\definecolor{colF}{RGB}{60,150,70}      
\definecolor{colG}{RGB}{70,110,190}     

\def\pathF{--(2.3686,-0.0026)--(2.3793,-0.0133)--(2.3900,-0.0241)--(2.4007,-0.0350)--(2.4114,-0.0460)--(2.4221,-0.0570)--(2.4328,-0.0681)--(2.4435,-0.0793)--(2.4542,-0.0905)--(2.4649,-0.1019)--(2.4756,-0.1133)--(2.4863,-0.1248)--(2.4970,-0.1364)--(2.5077,-0.1480)--(2.5184,-0.1598)--(2.5291,-0.1716)--(2.5398,-0.1836)--(2.5505,-0.1956)--(2.5612,-0.2077)--(2.5719,-0.2200)--(2.5826,-0.2323)--(2.5933,-0.2447)--(2.6040,-0.2573)--(2.6147,-0.2700)--(2.6254,-0.2828)--(2.6361,-0.2957)--(2.6468,-0.3087)--(2.6575,-0.3219)--(2.6682,-0.3353)--(2.6789,-0.3487)--(2.6896,-0.3623)--(2.7003,-0.3761)--(2.7110,-0.3901)--(2.7217,-0.4042)--(2.7324,-0.4185)--(2.7431,-0.4330)--(2.7538,-0.4477)--(2.7645,-0.4626)--(2.7752,-0.4778)--(2.7859,-0.4932)--(2.7966,-0.5089)--(2.8073,-0.5248)--(2.8180,-0.5411)--(2.8287,-0.5577)--(2.8394,-0.5746)--(2.8501,-0.5919)--(2.8608,-0.6097)--(2.8715,-0.6280)--(2.8822,-0.6468)--(2.8929,-0.6662)--(2.9036,-0.6863)--(2.9143,-0.7073)--(2.9250,-0.7292)--(2.9357,-0.7523)--(2.9464,-0.7769)--(2.9571,-0.8035)--(2.9678,-0.8328)--(2.9785,-0.8664)--(2.9892,-0.9081)--(2.9999,-0.9918)}

\def\pathG{--(2.0001,-0.5001)--(2.0170,-0.5085)--(2.0340,-0.5170)--(2.0509,-0.5255)--(2.0679,-0.5339)--(2.0848,-0.5424)--(2.1018,-0.5509)--(2.1187,-0.5594)--(2.1357,-0.5678)--(2.1526,-0.5763)--(2.1696,-0.5848)--(2.1865,-0.5933)--(2.2034,-0.6017)--(2.2204,-0.6102)--(2.2373,-0.6187)--(2.2543,-0.6271)--(2.2712,-0.6356)--(2.2882,-0.6441)--(2.3051,-0.6526)--(2.3221,-0.6610)--(2.3390,-0.6695)--(2.3560,-0.6780)--(2.3729,-0.6865)--(2.3899,-0.6949)--(2.4068,-0.7034)--(2.4237,-0.7119)--(2.4407,-0.7203)--(2.4576,-0.7288)--(2.4746,-0.7373)--(2.4915,-0.7458)--(2.5085,-0.7542)--(2.5254,-0.7627)--(2.5424,-0.7712)--(2.5593,-0.7797)--(2.5763,-0.7881)--(2.5932,-0.7966)--(2.6101,-0.8051)--(2.6271,-0.8135)--(2.6440,-0.8220)--(2.6610,-0.8305)--(2.6779,-0.8390)--(2.6949,-0.8474)--(2.7118,-0.8559)--(2.7288,-0.8644)--(2.7457,-0.8729)--(2.7627,-0.8813)--(2.7796,-0.8898)--(2.7966,-0.8983)--(2.8135,-0.9067)--(2.8304,-0.9152)--(2.8474,-0.9237)--(2.8643,-0.9322)--(2.8813,-0.9406)--(2.8982,-0.9491)--(2.9152,-0.9576)--(2.9321,-0.9661)--(2.9491,-0.9745)--(2.9660,-0.9830)--(2.9830,-0.9915)--(2.9999,-0.9999)}

\fill[colF, opacity=0.4]
  (2,-1) -- (2,0) -- (2.3686,0) \pathF -- (3,-1) -- (2,-1) -- cycle;

\fill[colG, opacity=0.4]
  (3,-1) -- (3,0) -- (2,0) -- (2,-0.5) \pathG -- cycle;

\draw[colF, thick, dashed] (2.3686,0) \pathF;
\draw[colG, thick, dashed] (2.0001,-0.5001) \pathG;

\draw[thick] (2,-1) rectangle (3,0);

\draw (2,-1) -- (2,-1.02) node[below] {$2$};
\draw (2.5,-1) -- (2.5,-1.02) node[below] {$2.5$};
\draw (3,-1) -- (3,-1.02) node[below] {$3$};

\draw (2,-1) -- (1.98,-1) node[left] {$-1$};
\draw (2,-0.5) -- (1.98,-0.5) node[left] {$-0.5$};
\draw (2,0) -- (1.98,0) node[left] {$0$};

\draw[->] (3,-1) -- (3.15,-1) node[right] {$p$};
\draw[->] (2,0) -- (2,0.15) node[above] {$\gamma$};

\node[font=\large] at (2.25,-0.85) {$A>0$};
\node[font=\large] at (2.78,-0.18) {$B>0$};

\end{tikzpicture}
\end{center}

Now, suppose that $p\in(1.3,2)$.
Then 
\[
A(p,\gamma)=\frac{3-p}{(1+\gamma)^2}-\frac{p+1}{2(p-1)(p+2\gamma)}.
\]
Since $p+2\gamma>0$, the condition $A(p,\gamma)\geq 0$ is equivalent to
\begin{equation}\label{p<2:Acondition}
    2(3-p)(p-1)(p+2\gamma)-(1+\gamma)^2(p+1)\geq 0,
\end{equation}
which can be written as 
\[
-\left(p+1\right)\gamma^2+\left(-4p^2+14p-14\right)\gamma+\left(-2p^3+8p^2-7p-1\right)\geq 0.
\]
For fixed $p$, this is a quadratic inequality for $\gamma$, which has a solution if and only if its discriminant $\Delta$ is nonnegative. This discriminant is exactly 
\begin{gather*}
\Delta = 8\left(p^4-11p^3+39p^2-53p+24\right) = 8(p-1)(p-p_0)(p-3)(p-p_1), \\
\text{where } p_0 \coloneqq\tfrac{7 - \sqrt{17}}{2} \approx 1.438, \ p_1 \coloneqq\tfrac{7 + \sqrt{17}}{2} \approx 5.562.
\end{gather*}
For $1.3<p<2$, the inequality \eqref{p<2:Acondition} has a solution $\gamma$ if and only if $p\in[p_0,2]$.
It remains to show that for all such $p$ we can find a solution $\gamma$ in the interval $(-\frac{1}{2},0]$.
In fact, there is a single $\gamma$ that works for all $p\in[p_0,2)$, namely
the unique solution $\gamma_0$ of 
\[
    2(3-p_0)(p_0-1)(p_0+2\gamma_0)-(1+\gamma_0)^2(p_0+1)=0,
\] 
which is equivalent to $A(p_0,\gamma_0)=0$.
We have
\[
\gamma_0 = \frac{-2p_0^2+7p_0-7}{p_0+1} = 1-p_0 \approx -0.438,
\]
and so $\gamma_0\in(-\frac{1}{2},0]$. Furthermore,
for all $p\in[p_0,2)$ we have 
\begin{align*}
    \frac{\partial}{\partial p}&\left(2(3-p)(p-1)(p+2\gamma_0)-(1+\gamma_0)^2(p+1)\right)
    \\
    &=-\gamma_0^2+(-8p+14)\gamma_0-6p^2+16p-7
    \\
    &=-6p^2+\left(16-8\gamma_0\right)p+\left(-\gamma_0^2+14\gamma_0-7\right)
    \\
    &\geq0,
\end{align*}
and so 
\begin{align*}
        2(3-p)&(p-1)(p+2\gamma_0)-(1+\gamma_0)^2(p+1)
        \\
        &\geq 2(3-p_0)(p_0-1)(p_0+2\gamma_0)-(1+\gamma_0)^2(p_0+1)
        \\
        &= 0.
\end{align*}
Hence, with this particular choice of $\gamma=\gamma_0$, the condition \eqref{p<2:Acondition} holds for all $p\in[p_0,2)$.
Furthermore, for $p\in [p_0,2)$ we also have
\begin{align*}
    B(p,\gamma_0)=\frac{p+1}{(p+2\gamma_0)(p-1)}-\frac{3(3-p)^2}{4(1+\gamma_0)^2}>0,
\end{align*}
\begin{figure}[htbp]
    \centering
    \includegraphics[width=0.5\textwidth]{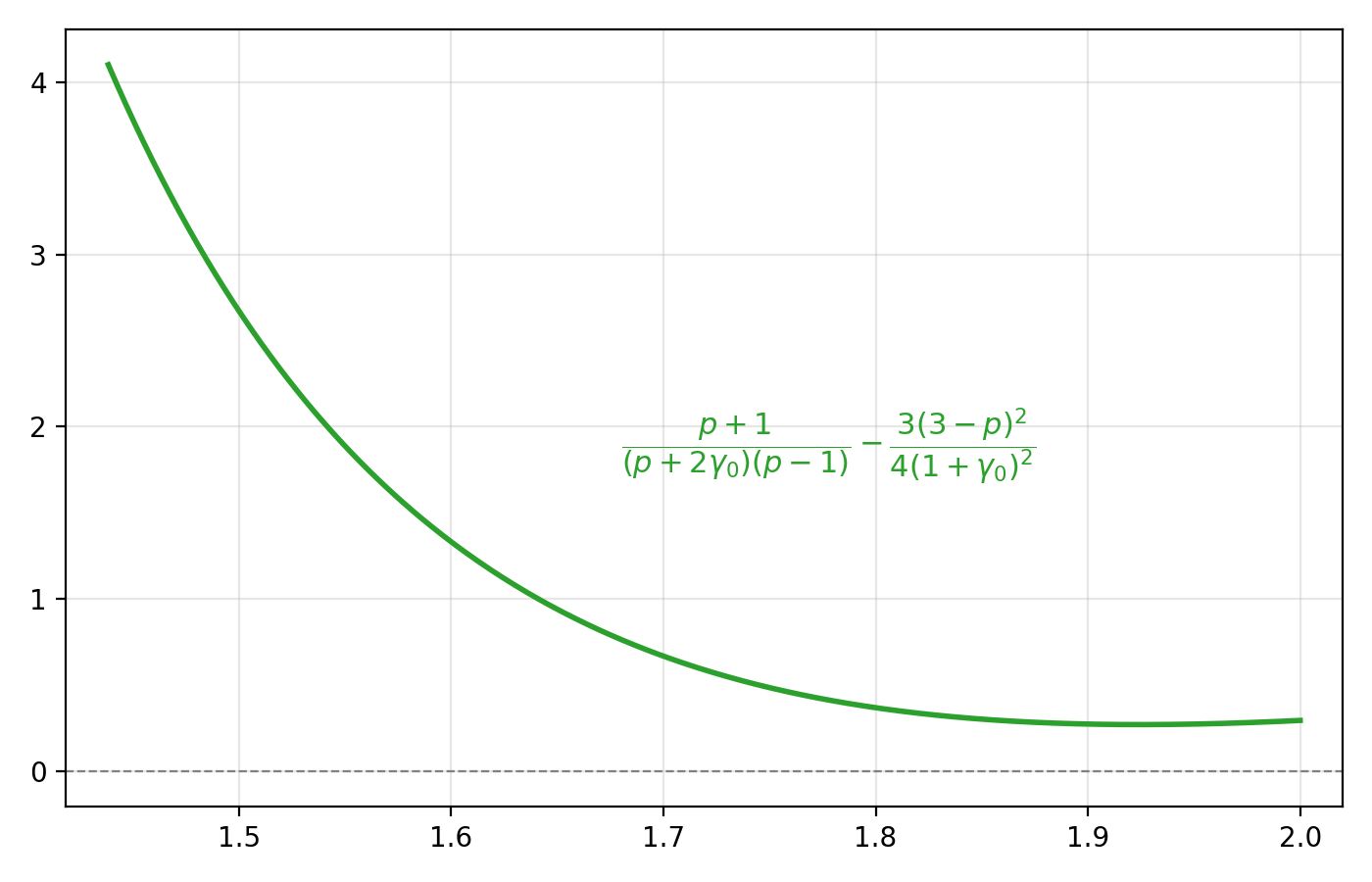}
    \caption{The graph of $B(p,\gamma_0)$ for $p\in[p_0,2)$}\label{fig:B_plot}
\end{figure}
which can be seen from Figure \ref{fig:B_plot} below.

Therefore, for all $p\geq p_0$, minimizing $p$-harmonic maps $u\colon B^3\to\S^3$ are regular.
\end{proof}

\bibliographystyle{abbrv}%
\bibliography{references}%
\end{document}